\title{Space-time finite element discretization of\\ parabolic optimal control problems\\ with energy regularization}
\author{Ulrich~Langer\footnote{Johann Radon Institute for Computational and
Applied Mathematics, Austrian Academy of Sciences, Altenberger Stra{\ss}e 69, 
4040 Linz, Austria, Email: ulrich.langer@ricam.oeaw.ac.at}, \; 
Olaf~Steinbach\footnote{Institut f\"{u}r Angewandte Mathematik,
Technische Universit\"{a}t Graz, Steyrergasse 30, 8010 Graz, Austria,
Email: o.steinbach@tugraz.at}, \; 
Fredi~Tr\"{o}ltzsch\footnote{Institut f\"{u}r Mathematik, Technische
Universit\"{a}t Berlin, Stra{\ss}e des 17. Juni 136, 10623 Berlin,
Germany, Email: troeltzsch@math.tu-berlin.de}, \; 
Huidong~Yang\footnote{Johann Radon Institute for Computational and Applied
Mathematics, Austrian Academy of Sciences, Altenberger Stra{\ss}e 69, 
4040 Linz, Austria, Email: huidong.yang@ricam.oeaw.ac.at} 
}
\date{\today}
\newtheorem{theorem}{Theorem}
\newtheorem{lemma}{Lemma}
\newcommand{\fredi}[1]{{\color{cyan}{#1}}}
\newcommand{\ignore}[1]{}
\begin{document}

\maketitle

\begin{abstract}
We analyze space-time finite element methods for the 
numerical solution of distributed parabolic optimal control problems 
with energy regularization in the Bochner space $L^2(0,T;H^{-1}(\Omega))$. 
By duality, the related norm can be evaluated by means of the solution 
of an elliptic quasi-stationary boundary value problem. When eliminating 
the control, we end up with the reduced optimality system that is nothing 
but the variational formulation of the coupled forward-backward primal 
and adjoint equations. Using Babu\v{s}ka's theorem, we prove unique 
solvability in the continuous case. Furthermore, we establish the discrete 
inf-sup condition  for  any conforming space-time finite element
discretization yielding quasi-optimal discretization error estimates.
Various numerical examples confirm the theoretical findings. We emphasize 
that the energy regularization results in a more localized control with 
sharper contours for discontinuous target functions, which is demonstrated 
by a comparison with an $L^2$ regularization and with a sparse optimal 
control approach.
\end{abstract} 

\begin{keywords}
Parabolic optimal control problems, space-time finite element methods, 
discretization error estimates.
\end{keywords}

\begin{msc}
35K20, 49J20, 65M15, 65M50, 65M60
\end{msc}

\section{Introduction}\label{sec:intro}
In this paper, we consider and analyze continuous space-time finite element 
methods on fully unstructured simplicial space-time meshes for the numerical 
solution of the following parabolic optimal control problem: For a  given 
target function $u_d \in L^2(Q)$, we want to minimize the cost functional
\begin{equation}\label{eqn:energy regularization}
{\mathcal J}(u,z):=\frac{1}{2}\int_Q
\left|u-{u}_d\right|^2 \, dx \, dt + \frac{1}{2}\varrho \, 
\|z\|_{L^2(0,T;H^{-1}(\Omega))}^2 
\end{equation}
subject to the linear parabolic state equation
\begin{equation}
\label{eqn:linearstateequation}
\partial_t u-\Delta_x u  =  z  \textup{ in } Q,\quad
u  =  0 \textup{ on } \Sigma,\quad
u = 0  \textup{ on } \Sigma_0 ,
\end{equation}
where $Q := \Omega \times (0,T)$ is the space-time 
domain 
with the
lateral boundary $\Sigma := \partial \Omega \times (0,T)$, and
$\Sigma_0 := \Omega \times \{ 0 \}$. 
Moreover, $\Omega \subset {\mathbb{R}}^d$, $d=2,3$, is a bounded Lipschitz domain,
$T>0$ is the final time, and $\varrho > 0$ is some regularization parameter.

In our recent paper \cite{LSTY_SISC_arXiv:2020}, we have considered the related
standard optimal control problem with regularization in $L^2(Q)$, i.e.,
\begin{equation}\label{eqn:L2 regularization}
{\mathcal J}(u,z):=\frac{1}{2}\int_Q
\left|u-{u}_d\right|^2 \, dx \, dt + \frac{1}{2}\varrho \, 
\|z\|_{L^2(Q)}^2 \, ,
\end{equation}
subject to \eqref{eqn:linearstateequation}. 
In both cases, we can numerically solve the corresponding parabolic 
forward-backward optimality systems at once. This allows not only for a more 
efficient solution of the global system, but also for parallelization in space 
and time, and for adaptive discretizations 
simultaneously in space and time. In contrast to classical time-stepping 
methods or discontinuous Galerkin (dG) methods which are defined with respect 
to time slices or slabs, see, e.g., 
the monographs \cite{LSTY:Lang:2000a} and \cite{LSTY:Thomee2006a},
and the review article \cite{LSTY:Gander:2015a} on parallel-in-time methods,
we use fully unstructured simplicial space-time meshes for the numerical
solution of the parabolic state equation (\ref{eqn:linearstateequation}),
see the recent review  article \cite{OSHY19} and the related 
references therein. 

The standard approach for distributed control problems is to consider the
control $z$ in $L^2(Q)$. There is a huge number of publications on the standard 
setting \eqref{eqn:L2 regularization} with $L^2(Q)$-regularization. We here
only refer to the monographs
\cite{LSTY:BorziSchulz:2011a,LSTY:HinzePinnauUlbrichUlbrich:2009a,FT10},
to the more recent papers \cite{WGMHZZ2012,DMBV2008a,DMBV2008b} on 
discontinuous (dG) and continuous Galerkin time-slice finite element methods,
\cite{LSTY:Neumueller:2010} on full space-time dG finite element methods, 
\cite{LSTY:GunzburgerKunoth:2011a} on space-time adaptive wavelet methods,
\cite{LSTY:KollmannKolmbauerLangerWolfmayrZulehner:2013a,
LSTY:LangerRepinWolfmayr:2016a} on multiharmonic methods,
\cite{AASV2015} on proper orthogonal decomposistion,
\cite{LSTY:BuengerDolgovStoll:2020a} on low-rank tensor method, and
to our very recent paper \cite{LSTY_SISC_arXiv:2020} on completely  
unstructured space-time finite element methods for optimal
control of parabolic equations based on $L^2(Q)$-regularization,
and the references given therein.
However, since the state $u \in L^2(0,T;H^1_0(\Omega))$ is well defined 
as the solution of the forward heat equation for $z \in L^2(0,T;H^{-1}(\Omega))$,
we may also consider the tracking type functional as given in 
\eqref{eqn:energy regularization}. Applying integration by parts also in time
to derive a variational formulation for the adjoint equation, we end up,
in contrast to the case of $L^2$ regularization,
with a positive definite but skew-symmetric bilinear form describing the
optimality system. In this paper, we provide a complete numerical analysis
for both the continuous and discrete system. 

The rest of the  paper is structured as follows. 
In Section~\ref{sec:prelim}, we introduce some notation and state some 
preliminary results on the solvability and numerical analysis of the 
parabolic initial-boundary value problem that serves as state equation 
in the optimal control problem. In Section~\ref{sec:sthm1}, we analyze
the unique solvability of the continuous optimality system, whereas 
Section~\ref{Section:Discretization} is devoted to the numerical analysis
of the space-time finite element approximation. Numerical results are
presented in Section~\ref{Section:Numerical Examples}
Finally, some conclusions are drawn in Section \ref{sec:con}. 

\section{Preliminaries}\label{sec:prelim}
In this section, we introduce basic notations, and summarize some recent 
results on space-time finite element methods for the numerical solution of 
the state equation \eqref{eqn:linearstateequation}. For the mathematical
analysis of parabolic initial boundary value problems in  
space-time Sobolev spaces, see \cite{LSTY:Ladyzhenskaya:1985a,
LSTY:LadyzhenskayaetSolonnikovUraltseva:1967a}, and
\cite{LSTY:Lions:1968a,LSTY:Zeidler:1990a} for 
Bochner spaces of abstract functions, mapping the time 
interval $(0,T)$ to some Hilbert or Banach space.

Following the latter approach, we define
\begin{eqnarray*}
X & := & L^2(0,T; H_0^1(\Omega)) \cap H_{0,}^1(0,T;H^{-1}(\Omega)) \\
  & = & \Big \{ v \in L^2(0,T; H_0^1(\Omega)) : \partial_t v \in 
        L^2(0,T;H^{-1}(\Omega)), \, v = 0 \mbox{ on }  \Sigma_0 \Big \},\\
Y & := & L^2(0,T;H_0^1(\Omega)), \qquad
Y^*:= L^2(0,T;H^{-1}(\Omega)),
\end{eqnarray*}
using the standard Sobolev spaces $H^1_0(\Omega)$ and its dual
$H^{-1}(\Omega)$. Note that we have 
$X = \{ v \in W(0,T):\, v = 0 \mbox{ on }  \Sigma_0\}$ 
as used in \cite{LSTY:Lions:1968a}. The related norms are given by
\[
\| u \|_X := \Big[ \| w_u \|^2_Y + \| u \|^2_Y \Big]^{1/2}, \quad
\| v \|_Y := \| \nabla_x v \|_{L^2(Q)}, 
\]
where $w_u \in Y$ is the unique solution of the variational 
formulation \cite{OS15}
\begin{equation}\label{eqn:Definition w H-1}
\int_Q \nabla_x w_u \cdot \nabla_x v \, dx \, dt = 
\langle \partial_t u , v \rangle_Q , \quad \forall v
\in Y.
\end{equation}
The standard weak formulation of the initial boundary value 
problem~\eqref{eqn:linearstateequation} reads as follows:
Given $z \in Y^*$, find $u \in X$ such that
\begin{equation}\label{eqn:stateequation} 
b(u,v) = \langle z, v \rangle_Q, \quad \forall v \in Y, 
\end{equation}
with the bilinear form $b(\cdot,\cdot): X \times Y \to \mathbb{R}$,
\begin{equation}
\label{eqn:bilinearform_b(.,.)} 
b(u,v) := \int_Q \Big[
\partial_t u \, v + \nabla_x u \cdot \nabla_x v \Big] \, dx \, dt, \quad 
\forall (u,v) \in X \times Y, 
\end{equation}
and the linear form $\langle z, \cdot \rangle_Q : Y \to \mathbb{R}$ 
with the duality pairing $\langle z,v \rangle_Q$
as extension of the inner product in $L^2(Q)$. Similarly, the first 
integral in \eqref{eqn:bilinearform_b(.,.)} has to be understood as 
duality pairing as well.

The bilinear form $b(\cdot,\cdot)$ is bounded,
\begin{equation}\label{eqn:boundedness_b(.,.)}
|b(u,v)| \leq \sqrt{2} \, \| u \|_X \| v \|_Y , \quad 
\forall (u,v) \in X \times Y,
\end{equation}
and satisfies the inf-sup stability condition \cite[Theorem 2.1]{OS15}
\begin{equation}\label{eqn:infsup_b(.,.)}
\inf\limits_{0 \neq u \in X}\sup\limits_{0 \neq v \in Y}
\frac{b(u,v)}{\| u \|_X \| v \|_Y} \ge \frac{1}{2\sqrt{2}} . 
\end{equation}
Moreover, for $v \in Y \setminus \{0\}$, we define
\[
\widetilde{u}(x,t) = \int_0^t v(x,s) \, ds, \quad (x,t) \in Q 
\]
to obtain
\[
b(\widetilde{u},v) = \| v \|^2_{L^2(Q)} +
\frac{1}{2} \, \| \nabla_x \widetilde{u}(T) \|^2_{L^2(\Omega)} > 0.
\]
Hence, we can apply the Ne\u{c}as-Babu\v{s}ka theorem 
\cite{IB71,LSTY:Necas:1962a} to conclude that the variational problem 
\eqref{eqn:stateequation} is well-posed, see also
\cite{LSTY:BabuskaAziz:1972a,LSTY:Braess:2007a,LSTY:ErnGuermond:2004a}.

For the finite element discretization of the variational formulation 
(\ref{eqn:stateequation}), we introduce conforming space-time finite 
element spaces $X_h \subset X$ and $Y_h \subset Y$, 
where we assume $X_h \subseteq Y_h$. In particular, we may use 
$X_h = Y_h = S_h^1(Q_h) \cap X$ spanned by continuous and piecewise 
linear basis functions which are defined with respect to some 
admissible decomposition $\mathcal{T}_h(Q)$ of the space-time domain 
$Q$ into shape regular simplicial finite elements $\tau_\ell$, and which 
are zero at the initial time $t=0$ and at the lateral boundary $\Sigma$,
where  $h$ denotes a suitable mesh-size parameter, see, e.g., 
\cite{LSTY:Braess:2007a,LSTY:ErnGuermond:2004a,OS15}.
Then the finite element approximation of 
(\ref{eqn:stateequation}) is to find $u_h \in X_h$ such that
\begin{equation}\label{eqn:stateequation FEM}
b(u_h,v_h) = \langle z , v_h \rangle_Q, \quad \forall v_h \in Y_h .
\end{equation}
When replacing (\ref{eqn:Definition w H-1})
by its finite element approximation to find $w_{u,h} \in Y_h$ such that
\begin{equation}\label{eqn:Definition w H-1 FEM}
\int_Q \nabla_x w_{u,h} \cdot \nabla_x v_h \, dx \, dt =
\int_Q \partial_t u \, v_h \, dx \, dt, \quad \forall v_h \in Y_h,
\end{equation}
we can define a discrete norm
\[
\| u \|_{X_h} \, := \Big[ \| w_{u,h} \|_Y^2 + \| u \|_Y^2 \Big]^{1/2} \, .
\]
As in the continuous case, see (\ref{eqn:infsup_b(.,.)}),
we can prove a discrete inf-sup condition, see \cite[Theorem 3.1]{OS15},
\begin{equation}\label{eqn:bilinear form heat discrete stability}
\frac{1}{2\sqrt{2}} \, \| u_h \|_{X_h} \leq 
\sup\limits_{0 \neq v_h \in Y_h} \frac{b(u_h,v_h)}{\| v_h \|_Y} , \quad
\forall u_h \in X_h .
\end{equation}
Hence, we conclude unique solvability of the Galerkin scheme 
(\ref{eqn:stateequation FEM}), and we obtain the following
quasi-optimal error estimate, see \cite[Theorem 3.2]{OS15}:
\begin{equation}\label{eqn:stateequation FEM Cea}
\| u - u_h \|_{X_{0,h}} \leq 5 \inf\limits_{z_h \in X_{0,h}} \| u - z_h \|_{X_0} \, .
\end{equation}
In particular, when assuming $u \in H^2(Q)$, this finally results in the
energy error estimate, see \cite[Theorem 3.3]{OS15},
\begin{equation}
\| u - u_h \|_{L^2(0,T;H^1_0(\Omega))} \leq c \, h \, |u|_{H^2(Q)} \, .
\end{equation}

\section{The first-order optimality system}
\label{sec:sthm1}
We now consider the optimal control problem to minimize 
\eqref{eqn:energy regularization} subject to the heat equation
\eqref{eqn:linearstateequation}. As in \eqref{eqn:Definition w H-1},
we define $w_z \in Y$ as the unique solution of the variational problem
\begin{equation}\label{eqn:normdefinition}
\int_Q \nabla_x w_z \cdot \nabla_x v \, dx \, dt = 
\langle z , v \rangle_Q \quad \forall v \in Y ,
\end{equation}
to conclude
\[
\|z\|^2_{L^2(0,T;H^{-1}(\Omega))} = \| \nabla_x w_z \|^2_{L^2(Q)} = 
\langle z , w_z \rangle_Q.
\]
Now, using standard arguments,  we can write the first-order optimality system
as the primal problem 
\begin{equation*}
\partial_t u-\Delta_x u = z \textup{ in } Q, \quad
u = 0 \textup{ on } \Sigma,\quad
u = 0 \textup{ on } \Sigma_0,
\end{equation*}
the adjoint problem
\begin{equation}
\label{eqn:adjointequation}
- \partial_t p -\Delta_x p = u - u_d \textup{ in } Q, \quad
p = 0 \textup { on } \Sigma , \quad
p = 0 \textup { on } \Sigma_T,
\end{equation}
and the gradient equation
\begin{equation}\label{eqn:gradientequation H-1}
p+\varrho w_z=0\textup{ in } Q. 
\end{equation}
Using the variational formulation (\ref{eqn:stateequation}) of the primal
problem, inserting the definition (\ref{eqn:normdefinition}), and
the gradient equation (\ref{eqn:gradientequation H-1}), we get
a first  variational equation to find $(u,p) \in X \times Y$ such that
\[    
\frac{1}{\varrho} \int_Q \nabla_x p \cdot \nabla_x v \, dx \, dt + 
\int_Q \Big[ \partial_t u \, v +\nabla_x u \cdot \nabla_x v
\Big] \, dx \, dt  = 0, \quad \forall v \in Y.
\]
On the other hand, when considering the variational formulation of the
adjoint problem and integrating by parts also in time, we arrive at the
second variational equation
\[
- \int_Q \Big[ p \, \partial_t q + \nabla_x p \cdot \nabla_x q \Big] 
\, dx \, dt 
+ \int_Q u \, q \, dx \, dt  = \int_Q u_d \, q \, dx \, dt, \quad
\forall q \in X .
\]
Hence, we end up with a variational problem to find
$(u,p) \in X \times Y$ such that
\begin{equation}\label{eqn:VF Optimality H-1}
{\mathcal{B}}(u,p;v,q) = \langle u_d , q \rangle_{L^2(Q)} , \quad 
\forall (v,q) \in Y \times X \, ,
\end{equation}
with
\begin{equation}\label{eqn:def bilinear form B H-1}
{\mathcal{B}}(u,p;v,q) := \frac{1}{\varrho} \, 
a(p,v) + b(u,v) - b(q,p) + c(u,q).
\end{equation}
Here\fredi{,} the bilinear form $b(\cdot,\cdot)$ is the same as used 
in Section~\ref{sec:prelim},
\[
a(p,v) := \int_Q \nabla_x p \cdot \nabla_x v \, dx \, dt, 
\quad \mbox{and}\quad 
c(u,q) := \int_Q u \, q \, dx \, dt .
\]
Note that $a(p,p) = \| p \|^2_Y$ and $c(u,u) = \| u \|^2_{L^2(Q)}$.

\begin{theorem}
For $u_d \in X^*$, the variational problem \eqref{eqn:VF Optimality H-1} 
admits a unique solution $(u,p) \in X \times Y$ satisfying the a priori 
estimates
\[
\| u \|_X \leq \frac{8}{\varrho} \, \| u_d \|_{X^*}, \quad
\| p \|_Y \leq \sqrt{2} \, 8 \, \| u_d \|_{X^*} .
\]
\end{theorem}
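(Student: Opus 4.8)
The plan is to verify the hypotheses of the Ne\v{c}as--Babu\v{s}ka theorem for the bilinear form $\mathcal{B}$ of \eqref{eqn:def bilinear form B H-1} on the pair of reflexive Hilbert spaces $X\times Y$ (trial) and $Y\times X$ (test), equipped with the product norms $\|(u,p)\|^2 := \|u\|_X^2 + \varrho^{-1}\|p\|_Y^2$ and $\|(v,q)\|^2 := \|v\|_Y^2 + \|q\|_X^2$, the $\varrho$-weighting being suggested by the asymmetric a priori bounds. Boundedness is routine: $a$ and $c$ are bounded by Cauchy--Schwarz (with $c$ controlled through the continuous embeddings into $L^2(Q)$), while $b$ is bounded by \eqref{eqn:boundedness_b(.,.)}, so each of the four contributions in \eqref{eqn:def bilinear form B H-1} is bounded by the product norms. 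It then remains to establish the inf-sup condition and the non-degeneracy of the associated operator; uniqueness and the a priori estimates follow once the inf-sup constant is made explicit.

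For the inf-sup condition I would, given $(u,p)\in X\times Y$, build the test pair in two stages. First, the symmetric choice $v_1 = u + \lambda p$ and $q_1 = \lambda u$ (both admissible since $u\in X\subset Y$ and $p\in Y$) makes the two indefinite contributions $\lambda\,b(u,p)$ and $-\lambda\,b(u,p)$ cancel, leaving
\[
\mathcal{B}(u,p;v_1,q_1) = \tfrac{1}{\varrho}a(p,u) + \tfrac{\lambda}{\varrho}\|p\|_Y^2 + b(u,u) + \lambda\,\|u\|_{L^2(Q)}^2 .
\]
Since $b(u,u) = \|u\|_Y^2 + \tfrac12\|u(T)\|_{L^2(\Omega)}^2 \ge \|u\|_Y^2$ (using $u(0)=0$), and the cross term $\varrho^{-1}a(p,u)$ is absorbed by Young's inequality into $\tfrac{\lambda}{\varrho}\|p\|_Y^2$ and $\|u\|_Y^2$ for $\lambda$ of order $\varrho^{-1}$, this stage yields coercive control of $\|p\|_Y$, $\|u\|_Y$ and $\|u\|_{L^2(Q)}$. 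What is still missing is the dual time-derivative part $\|w_u\|_Y$ of the full norm $\|u\|_X$, because $b(\cdot,\cdot)$ is not coercive on $X$.

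To recover it I would invoke the inf-sup stability \eqref{eqn:infsup_b(.,.)} of $b$: choose $v_\ast\in Y$, normalized by $\|v_\ast\|_Y = \|u\|_X$, with $b(u,v_\ast)\ge \tfrac{1}{2\sqrt2}\|u\|_X^2$, and add a small multiple $\delta v_\ast$ to the test function. This contributes $\delta\,b(u,v_\ast)\ge \tfrac{\delta}{2\sqrt2}\|u\|_X^2$, completing the control of $\|u\|_X^2$, at the price of the indefinite term $\tfrac{\delta}{\varrho}a(p,v_\ast)$; the latter is bounded by $\tfrac{\delta}{\varrho}\|p\|_Y\|u\|_X$ and is reabsorbed into the coercivity in $\|p\|_Y$ already gained, provided $\delta$ is sufficiently small. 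Collecting the two stages and optimizing over $\lambda$ and $\delta$ gives the inf-sup constant, from which testing the right-hand side $\langle u_d, q\rangle$ and tracking constants produce the stated bounds; the bound on $\|p\|_Y$ also follows from $\|p\|_Y\le \sqrt2\,\varrho\,\|u\|_X$, read off by testing the first variational equation with $v=p$.

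Finally, for non-degeneracy I would check that the only test pair annihilated by $\mathcal{B}$ is $(v,q)=(0,0)$: setting $\mathcal{B}(u,p;v,q)=0$ for all $(u,p)$ and specializing to $u=0$ and then $p=0$ gives $\tfrac{1}{\varrho}a(p,v) = b(q,p)$ for all $p\in Y$ and $b(u,v) = -c(u,q)$ for all $u\in X$; the transposed inf-sup of $b$, together with the coercivity of $a$, then forces $v=0$ and $q=0$. The main obstacle is the inf-sup step: unlike a purely coercive form, the coercive test pair here controls every norm except the dual time-derivative seminorm of $u$, so the delicate point is to splice in the inf-sup test function $v_\ast$ of $b$ while keeping the indefinite cross terms $\varrho^{-1}a(p,v)$ under control, which forces the careful, $\varrho$-dependent balancing of $\lambda$ and $\delta$.
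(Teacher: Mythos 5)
Your proposal is correct in substance, but it takes a genuinely different route from the paper. You verify the Babu\v{s}ka conditions (boundedness, inf-sup, non-degeneracy) directly for the coupled bilinear form ${\mathcal B}$ of \eqref{eqn:def bilinear form B H-1}; the paper instead rewrites \eqref{eqn:VF Optimality H-1} as an operator system with $A$, $B$, $C$, eliminates $p=-\varrho A^{-1}Bu$, and shows that the Schur complement $C+\varrho B^*A^{-1}B$ is elliptic on $X$ --- the key point being $\frac{1}{2\sqrt 2}\|u\|_X\le \|A^{-1}Bu\|_Y$, which is the inf-sup condition \eqref{eqn:infsup_b(.,.)} for $b$ in disguise --- so that a coercivity (Lax--Milgram type) argument settles existence, uniqueness and both a priori bounds at once, with no surjectivity check needed. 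Interestingly, your two-stage test pair $v=u+\lambda p+\delta v_*$, $q=\lambda u$ is essentially the pair $v=u+w_u+\alpha p$, $q=\alpha u$ that the paper uses in its separate Lemma~\ref{thm:inf sup H-1} (proved right after the theorem, because the inf-sup condition is what the discretization analysis needs): your $v_*$, the maximizer of $b(u,\cdot)/\|\cdot\|_Y$, is exactly a rescaled $u+w_u$, since $u+w_u$ is the Riesz representative of $b(u,\cdot)$ in $Y$. So your proof effectively merges the theorem with that lemma; what you buy is that the stability condition is established along the way, what the paper buys is a shorter argument with sharp, $\varrho$-explicit constants. Your non-degeneracy check is fine and can even be streamlined: in the two identities you derive, take $p=v$ and $u=q$ and add, giving $\varrho^{-1}\|v\|_Y^2+\|q\|_{L^2(Q)}^2=0$, so $(v,q)=(0,0)$ without invoking any inf-sup for $b$.

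The one place where your outline is thinner than the statement demands is the constants. The generic Babu\v{s}ka bound gives $\bigl[\|u\|_X^2+\|p\|_Y^2\bigr]^{1/2}\le \beta^{-1}\|u_d\|_{X^*}$ with $\beta$ your inf-sup constant, which is of order $\varrho$; this yields $\|p\|_Y\lesssim \varrho^{-1}\|u_d\|_{X^*}$, much weaker than the stated $\varrho$-independent bound $\|p\|_Y\le 8\sqrt2\,\|u_d\|_{X^*}$, and your weighted trial norm mitigates but does not remove this mismatch, nor does it produce the factor $8/\varrho$ exactly. The fix is the argument you only hint at, and it should be made explicit: once uniqueness is known, derive the bounds from the equations themselves. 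Testing the first variational equation with $v=p$ gives $b(u,p)=-\varrho^{-1}\|p\|_Y^2$, hence $\|p\|_Y\le \sqrt2\,\varrho\,\|u\|_X$ by \eqref{eqn:boundedness_b(.,.)}; testing it with arbitrary $v$ and using \eqref{eqn:infsup_b(.,.)} gives $\frac{1}{2\sqrt2}\|u\|_X\le \varrho^{-1}\|p\|_Y$; testing the second equation with $q=u$ and combining gives $\frac{\varrho}{8}\|u\|_X^2\le \varrho^{-1}\|p\|_Y^2+\|u\|_{L^2(Q)}^2=\langle u_d,u\rangle_Q\le \|u_d\|_{X^*}\|u\|_X$, i.e.\ precisely $\|u\|_X\le \frac{8}{\varrho}\|u_d\|_{X^*}$ and then $\|p\|_Y\le 8\sqrt2\,\|u_d\|_{X^*}$. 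This post-hoc computation is the paper's Schur complement argument written variationally; without it, your route proves well-posedness but not the stated estimates.
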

\begin{proof}
Using the Riesz representation theorem, we introduce 
operators $A : Y \to Y^*$, $B : X \to Y^*$, and $C: X \to X^*$, 
satisfying, for $u,q \in X$ and $p,v \in Y$,
\[
\langle A p , v \rangle_Q = a(p,v), \quad
\langle B u , v \rangle_Q = b(u,v), \quad
\langle C u , q \rangle_Q = c(u,q) .
\]
Hence, we can write the variational
problem \eqref{eqn:VF Optimality H-1} as operator equation
\[
\left( \begin{array}{cc} \frac{1}{\varrho} \, A & B \\[1mm] - B^* & C
\end{array} \right) \left( \begin{array}{c} p \\[1mm] u \end{array} \right)
= \left( \begin{array}{c} 0 \\[1mm] u_d \end{array} \right) .
\]
Since the operator $ A : Y \to Y^*$ is bounded and elliptic,
we can determine $p = - \varrho A^{-1} B u$ to obtain the Schur
complement system
\begin{equation}\label{eqn: H-1 Schur}
\Big[ C + \varrho B^* A^{-1} B \Big] u = u_d \quad \mbox{in} \; X^* .
\end{equation}
For $\overline{p} = A^{-1} B u$, we first have
\[
\| \overline{p} \|^2_Y = \int_Q |\nabla_x \overline{p}|^2 \, dx \, dt =
a(\overline{p},\overline{p}) = \langle A \overline{p} , \overline{p} \rangle_Q
= \langle B^* A^{-1} B u , u \rangle_Q \, .
\]
From the stability condition \eqref{eqn:infsup_b(.,.)} for the state 
equation, see Section~\ref{sec:prelim}, we immediately get
\[
\frac{1}{2\sqrt{2}} \, \| u \|_X \leq \sup\limits_{0 \neq v \in Y}
\frac{\langle B u , v \rangle_Q}{\| v \|_Y} =
\sup\limits_{0 \neq v \in Y}
\frac{\langle A \overline{p} , v \rangle_Q}{\| v \|_Y} \leq
\| \overline{p} \|_Y \, .
\]
Hence, we have
\[
\langle (C + \varrho B^* A^{-1} B) u , u \rangle_Q \geq 
\frac{1}{8} \, \varrho \, \| u \|^2_X \quad \mbox{for all} \; u \in X.
\] 
Thus, we conclude unique solvability of the Schur complement
system \eqref{eqn: H-1 Schur}, and from
\[
\frac{1}{8} \, \varrho \, \| u \|^2_X \leq
\langle (C + \varrho B^* A^{-1} B) u , u \rangle_Q =
\langle u_d , u \rangle_Q \leq \| u_d \|_{X^*} \| u \|_X
\]
we obtain the first estimate. Now, the boundedness of $B$,
i.e., the boundedness \eqref{eqn:boundedness_b(.,.)} of the bilinear 
form $b(\cdot,\cdot)$ yields
\[
\| p \|^2_Y = a(p,p) = - \varrho \, b(u,p) \leq 
\sqrt{2} \, \varrho \, \| u \|_X \| p \|_Y,
\]
i.e.,
\[
\| p \|_Y \leq \sqrt{2} \, \varrho \, \|u \|_X \leq 
\sqrt{2} \, 8 \, \| u_d \|_{X^*} .
\]
\end{proof}

\noindent
Although unique solvability of the variational problem 
\eqref{eqn:VF Optimality H-1} already implies a related stability 
condition for the bilinear form ${\mathcal{B}}(u,p;v,q)$, we
will present an alternative proof for this stability condition
in order to be able to derive related results for the
Galerkin discretization of \eqref{eqn:VF Optimality H-1}.

\begin{lemma}\label{thm:inf sup H-1}
The bilinear form \eqref{eqn:def bilinear form B H-1} satisfies the
stability condition
\begin{equation}\label{eqn:stability condition H-1}
\frac{1}{16} \, \varrho \, \Big[  \| u \|_X^2 + \| p \|_Y^2 \Big]^{1/2} \leq
\sup\limits_{0 \neq (v,q) \in Y \times X}
\frac{{\mathcal{B}}(u,p;v,q)}{\Big[  \| v \|_Y^2 + \| q \|_X^2 \Big]^{1/2}}
\end{equation}
for all $(u,p) \in X \times Y$, when assuming $\varrho \leq 1$.
\end{lemma}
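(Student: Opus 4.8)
The plan is to prove the inf-sup condition for the block bilinear form $\mathcal{B}$ by exploiting the structure already analyzed in the theorem. Given a fixed pair $(u,p)\in X\times Y$, I want to construct a test pair $(v,q)\in Y\times X$ for which $\mathcal{B}(u,p;v,q)$ is bounded below by a constant multiple of $\varrho\,[\|u\|_X^2+\|p\|_Y^2]$, while $[\|v\|_Y^2+\|q\|_X^2]^{1/2}$ is controlled by $[\|u\|_X^2+\|p\|_Y^2]^{1/2}$.

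First I would gather the natural diagonal contributions. Choosing $q=u$ and $v=p$ and inserting into \eqref{eqn:def bilinear form B H-1} makes the off-diagonal terms cancel, $b(u,p)-b(u,p)=0$, leaving $\tfrac{1}{\varrho}a(p,p)+c(u,u)=\tfrac{1}{\varrho}\|p\|_Y^2+\|u\|_{L^2(Q)}^2$. This gives positivity but controls $u$ only in the weak $L^2(Q)$ norm, not in the full $X$ norm, so it is insufficient on its own. To recover the missing $X$-control of $u$, I would add a test function adapted to the inf-sup condition \eqref{eqn:infsup_b(.,.)} for $b$: let $v_u\in Y$ be a near-maximizer realizing $b(u,v_u)\ge \tfrac{1}{2\sqrt2}\|u\|_X\|v_u\|_Y$ with $\|v_u\|_Y=\|u\|_X$. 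Taking the combined test pair of the form $v=p+\beta\,v_u$ and $q=u$ for a small parameter $\beta>0$ introduces the positive term $\beta\,b(u,v_u)\gtrsim \beta\|u\|_X^2$, at the cost of cross terms $\tfrac{\beta}{\varrho}a(p,v_u)$ and $\beta\,b(u,p)$ that I would absorb using Young's inequality and the boundedness of $a$ and $b$.

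The key steps in order are: (i) record the cancelling off-diagonal identity for $q=u,\ v=p$; (ii) estimate the mixed terms produced by the $\beta v_u$ perturbation via Cauchy–Schwarz and $ab\le \tfrac{\varepsilon}{2}a^2+\tfrac{1}{2\varepsilon}b^2$, keeping track of the $1/\varrho$ factor on $a(p,v_u)$; (iii) choose $\beta$ (and the Young parameters) small enough, together with the assumption $\varrho\le 1$, so that the harmful terms are dominated by $\tfrac{1}{\varrho}\|p\|_Y^2$ and $\beta\|u\|_X^2$, yielding a lower bound of the form $c\,\varrho\,[\|u\|_X^2+\|p\|_Y^2]$; and (iv) bound the norm of the test pair, $\|v\|_Y^2+\|q\|_X^2\le (\|p\|_Y+\beta\|u\|_X)^2+\|u\|_X^2\lesssim \|u\|_X^2+\|p\|_Y^2$. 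Dividing the lower bound by the test-pair norm then produces the claimed constant $\tfrac{1}{16}\varrho$ after tracking the explicit factors $\sqrt2$, $2\sqrt2$, and $8$ coming from \eqref{eqn:boundedness_b(.,.)} and \eqref{eqn:infsup_b(.,.)}.

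The main obstacle I anticipate is the factor $1/\varrho$ multiplying $a(p,v_u)$ in the cross term: since this coefficient blows up as $\varrho\to 0$, a naive Young split would force $\beta$ to scale with $\varrho$ and degrade the constant. The assumption $\varrho\le 1$ is clearly there to tame this, so the delicate part is to split the offending cross term so that its $\|p\|_Y^2$ part is absorbed into the \emph{large} coefficient $\tfrac{1}{\varrho}\|p\|_Y^2$ (which is favorable precisely because $1/\varrho\ge 1$) while its $\|u\|_X^2$ part is kept small relative to $\beta\|u\|_X^2$. Getting the bookkeeping of these constants to collapse exactly to $\tfrac{1}{16}\varrho$ — rather than some unspecified $c\varrho$ — is the one place where I would expect to have to be careful rather than merely routine.
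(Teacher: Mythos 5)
Your proposal is correct and is essentially the paper's own argument: the paper tests with $(v,q)=(u+w_u+\alpha p,\,\alpha u)$, which is exactly your pair $(p+\beta v_u,\,u)$ rescaled by $\alpha=1/\beta$, where the explicit function $u+w_u$ --- the very maximizer used in \cite{OS15} to establish \eqref{eqn:infsup_b(.,.)} --- plays the role of your abstract near-maximizer $v_u$, giving $b(u,u+w_u)\ge\tfrac12\|u\|_X^2$ (and note that with $q=u$ the off-diagonal $b$-terms cancel exactly, so your worry about an extra $\beta\,b(u,p)$ cross term is unfounded). Beyond that the steps coincide: Young's inequality with a $\varrho$-dependent parameter (the paper takes $\gamma=\varrho/4$ and $\alpha=\varrho/4+2/\varrho$) absorbs $\tfrac{1}{\varrho}\,a(p,u+w_u)$ into the good terms, and the assumption $\varrho\le 1$ enters only to bound the test-pair norm by $\tfrac{16}{\varrho^2}\,\bigl[\|u\|_X^2+\|p\|_Y^2\bigr]$, which combined with the lower bound $\tfrac14\,\bigl[\|u\|_X^2+\|p\|_Y^2\bigr]$ yields exactly the constant $\tfrac{1}{16}\,\varrho$, confirming that the bookkeeping you anticipated does close.
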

\begin{proof}
For $u \in X \subset Y$, let $w_u \in Y$ be the unique solution of 
the variational
problem \eqref{eqn:Definition w H-1}. For $p \in Y$ and arbitrary 
$\alpha \in {\mathbb{R}}_+$, we have $v := u + w_u + \alpha p \in Y$. 
Choosing $q := \alpha u \in X$, we obtain
\begin{eqnarray*}
{\mathcal{B}}(u,p;v,q) & = & 
\frac{1}{\varrho} \,
a(p,u+w_u+\alpha p) + b(u,u+w_u+\alpha p) - b(\alpha u,p) + c(u,\alpha u) 
\\ & \geq & \frac{\alpha}{\varrho} \, 
\| p \|^2_Y + \frac{1}{\varrho} \, a(p,u+w_u) + b(u,u+w_u) \, .
\end{eqnarray*}
Following the proof of \cite[Theorem 2.1]{OS15}, we use 
\begin{eqnarray*}
b(u,u+w_u) & = & \int_Q \Big[ \partial_t u \, (u+w_u) +
\nabla_x u \cdot \nabla_x (u+w_u) \Big] dx \, dt \\ & & \hspace*{-2.5cm} = \,
\frac{1}{2} \, \| u(T) \|^2_{L^2(\Omega)} + 
\int_Q \nabla_x w_u \cdot \nabla_x w_u \, dx \, dt +
\| u \|^2_Y + \int_Q \nabla_x u \cdot \nabla_x w_u \, dx \, dt \\
& & \hspace*{-2.5cm} \geq \,
\| w_u \|^2_Y + \| u \|_Y^2 - \| u \|^2_Y \| w_u \|_Y^2 \\ 
& & \hspace*{-2.5cm} \geq \, 
\frac{1}{2} \, \Big[ \| w_u \|_Y^2 + \| u \|^2_Y \Big] = 
\frac{1}{2} \, \| u \|^2_X .
\end{eqnarray*}
Moreover, for $\gamma \in {\mathbb{R}}_+$, we have
\begin{eqnarray*}
a(p,u+w_u) & = & \int_Q \nabla_x p \cdot \nabla_x (u+w_u) \, dx \, dt
\, \geq \, - \| p \|_Y \| u+w_u \|_Y \\ & \geq &
- \frac{1}{2\gamma} \, \| p \|^2_Y - \frac{1}{2}\gamma \, \| u+w_u \|^2_Y
\\ & \geq &
- \frac{1}{2\gamma} \, \| p \|^2_Y 
- \gamma \, \Big( \| u \|_Y^2 + \| w_u \|^2_Y \Big) =
- \frac{1}{2\gamma} \, \| p \|^2_Y - \gamma \, \| u \|_X^2 .
\end{eqnarray*}
Choosing $\gamma = \frac{1}{4}\varrho$ and
$\alpha = \frac{1}{4}\varrho + \frac{2}{\varrho}$, we get
\begin{eqnarray*}
{\mathcal{B}}(u,p;v,q) & \geq & 
\frac{1}{\varrho} \left( \alpha - \frac{1}{2\gamma} \right) \, \| p \|^2_Y 
+ \left( \frac{1}{2} - \frac{\gamma}{\varrho} \right) \, 
\| u \|_X^2 \\ & = & 
\frac{1}{4} \, \Big[ \| u \|^2_X + \| p \|^2_Y \Big] \, .
\end{eqnarray*}
On the other hand, we have
\begin{eqnarray*}
\| q \|_X^2 + \| v \|_Y^2 & = & 
\alpha^2 \, \| u \|_X^2 + \| u + w_u + \alpha p \|_Y^2 
\\ & \leq &
\alpha^2 \, \| u \|^2_X + 
\Big( \| u \|_Y + \| w_u \|_Y + \alpha \, \| p \|_Y \Big)^2 
\\ & \leq &
\alpha^2 \, \| u \|^2_X +   
(2+\alpha^2) \, \Big( \| u \|^2_Y + \| w_u \|^2_Y + \| p \|_Y^2
\Big) \\ & \leq &
2 ( 1+ \alpha^2) \, \Big[ \| u \|_X^2 + \| p \|_Y^2 \Big] 
\\ & = &
2 \Big( 2 + \frac{1}{16} \varrho^2 + \frac{4}{\varrho^2} \Big) \, 
\Big[ \| u \|_X^2 + \| p \|_Y^2 \Big] 
\\ & \leq &
\Big( 4 + \frac{1}{8} + 8  \Big) \, 
\frac{1}{\varrho^2} \Big[ \| u \|_X^2 + \| p \|_Y^2 \Big] \leq 
\frac{16}{\varrho^2} \Big[ \| u \|_X^2 + \| p \|_Y^2 \Big] ,
\end{eqnarray*}
provided that $\varrho \leq 1$. Therefore,
\[
{\mathcal{B}}(u,p;v,q) \geq \frac{1}{4} \,
\Big[ \| u \|_X^2 + \| p \|_Y^2 \Big] \geq
\frac{1}{16} \, \varrho \,  \Big[ \| u \|_X^2 + \| p \|_Y^2 \Big]^{1/2}
\Big[ \| q \|_X^2 + \| v \|_Y^2 \Big]^{1/2}
\]
follows. This concludes the proof.
\end{proof}

\section{Discretization}\label{Section:Discretization}
As before, let $X_{0,h} \subset X_0$ and $Y_h \subset Y$ be some conforming
space-time finite element spaces satisfying $X_{0,h} \subseteq Y_h$.
Again, we choose $X_{0,h} = S_h^1(Q_h) \cap X_0$, but now we use 
$Y_h = S_h^1(Q) \cap Y$. By construction, we have $X_{0,h} \subset Y_h$.

Instead of (\ref{eqn:Definition w H-1 FEM}), we now consider the
variational formulation to find $w_{u,h} \in Y_h$ such that
\begin{equation}\label{eqn:Definition w H-1 FEM Yh}
\int_Q \nabla_x w_h \cdot \nabla_x v_h \, dx \, dt =
\int_Q \partial_t u \, v_h \, dx \, dt, \quad \forall v_h \in Y_h,
\end{equation}
to define the discrete norm
\[
\| u \|_{X_{0,h}} := \Big[ \| w_{u,h} \|_Y^2 + \| u \|_Y^2 \Big]^{1/2} .
\]
The space-time finite element discretization of the variational
formulation (\ref{eqn:VF Optimality H-1}) is to find
$(u_h,p_h) \in X_{0,h} \times Y_h$ such that
\begin{equation}\label{eqn:VF Optimality H-1 FEM}
{\mathcal{B}}(u_h,p_h;v_h,q_h) = \langle u_d , q_h \rangle_{L^2(Q)} , \quad 
\forall (v_h,q_h) \in Y_h \times X_{0,h} \, .
\end{equation}
As in the continuous case, see Theorem \ref{thm:inf sup H-1},
and following \cite[Section 3]{OS15}, we can confirm a discrete
inf-sup condition for the bilinear form 
${\mathcal{B}}(\cdot,\cdot;\cdot,\cdot)$.

\begin{lemma}
\label{thm:inf sup H-1 FEM}
The bilinear form (\ref{eqn:def bilinear form B H-1}) 
satisfies the discrete stability condition
\begin{equation}\label{eqn:stability condition H-1 FEM}
\frac{1}{16} \, \varrho \, 
\Big[  \| u_h \|_{X_{0,h}}^2 + \| p_h \|_Y^2 \Big]^{1/2} \leq
\sup\limits_{0 \neq (v_h,q_h) \in Y_h \times X_{0,h}}
\frac{{\mathcal{B}}(u_h,p_h;v_h,q_h)}{\Big[  \| v_h \|_Y^2 + 
\| q_h \|_{X_{0,h}}^2 \Big]^{1/2}}
\end{equation}
for all $(u_h,p_h) \in X_{0,h} \times Y_h$, when assuming 
$X_{0,h} \subseteq Y_h$ and $\varrho \leq 1$.
\end{lemma}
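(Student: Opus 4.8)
The plan is to mirror the continuous proof of Lemma~\ref{thm:inf sup H-1} line by line at the discrete level, exploiting the structural assumption $X_{0,h} \subseteq Y_h$ so that every test function constructed in the continuous argument remains available in the discrete spaces. Given $(u_h,p_h) \in X_{0,h} \times Y_h$, I would let $w_{u,h} \in Y_h$ be the discrete solution of \eqref{eqn:Definition w H-1 FEM Yh}, and then set $v_h := u_h + w_{u,h} + \alpha\, p_h$ and $q_h := \alpha\, u_h$ with the \emph{same} constants $\gamma = \tfrac{1}{4}\varrho$ and $\alpha = \tfrac{1}{4}\varrho + \tfrac{2}{\varrho}$ as before. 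The key point is that $v_h$ is a legitimate element of $Y_h$: since $u_h \in X_{0,h} \subseteq Y_h$, $w_{u,h} \in Y_h$, and $p_h \in Y_h$, their linear combination lies in $Y_h$; likewise $q_h = \alpha u_h \in X_{0,h}$. Thus both test functions are admissible for \eqref{eqn:VF Optimality H-1 FEM}.

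The second step is to reproduce the two lower bounds from the continuous proof. For the term $b(u_h, u_h + w_{u,h})$ I would follow the computation in \cite[Theorem 3.1]{OS15} using the \emph{discrete} defining equation \eqref{eqn:Definition w H-1 FEM Yh} in place of \eqref{eqn:Definition w H-1}; the crucial observation is that in the identity producing $\|w_{u,h}\|_Y^2$ one tests \eqref{eqn:Definition w H-1 FEM Yh} with $v_h = w_{u,h} \in Y_h$, which is exactly the discrete analogue of the continuous manipulation, yielding $b(u_h, u_h + w_{u,h}) \geq \tfrac{1}{2}\,\|u_h\|_{X_{0,h}}^2$. The estimate for $a(p_h, u_h + w_{u,h})$ is purely algebraic (Cauchy--Schwarz and Young's inequality) and transfers verbatim, giving the same bound in terms of $\|p_h\|_Y^2$ and $\|u_h\|_{X_{0,h}}^2$. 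Combining these with the choice of $\alpha$ and $\gamma$ gives
\[
{\mathcal{B}}(u_h,p_h;v_h,q_h) \geq \frac{1}{4}\,\Big[\|u_h\|_{X_{0,h}}^2 + \|p_h\|_Y^2\Big].
\]

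The third step is the denominator bound. Here I would estimate $\|q_h\|_{X_{0,h}}^2 + \|v_h\|_Y^2$ exactly as in the continuous case, using the triangle inequality on $\|u_h + w_{u,h} + \alpha p_h\|_Y$ and the fact that $\|q_h\|_{X_{0,h}} = \alpha\|u_h\|_{X_{0,h}}$ by the definition of the discrete norm. Since all three quantities $\|u_h\|_Y$, $\|w_{u,h}\|_Y$, $\|p_h\|_Y$ are controlled by $\big[\|u_h\|_{X_{0,h}}^2 + \|p_h\|_Y^2\big]^{1/2}$ (again by the definition of the discrete norm), the same numerical constant analysis under $\varrho \leq 1$ delivers $\|q_h\|_{X_{0,h}}^2 + \|v_h\|_Y^2 \leq \tfrac{16}{\varrho^2}\big[\|u_h\|_{X_{0,h}}^2 + \|p_h\|_Y^2\big]$. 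Dividing the lower bound on ${\mathcal{B}}$ by this upper bound on the denominator norm then yields \eqref{eqn:stability condition H-1 FEM}.

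I do not expect a genuine obstacle here, since the argument is a faithful translation of the continuous proof; the only thing requiring care is making sure every substitution of the discrete $w_{u,h}$ is legal, i.e.\ that we only ever test \eqref{eqn:Definition w H-1 FEM Yh} against elements of $Y_h$ and only ever insert $v_h, q_h$ that genuinely lie in $Y_h$ and $X_{0,h}$ respectively. This is precisely what the hypothesis $X_{0,h} \subseteq Y_h$ guarantees, and it is the one place where a non-conforming or mismatched choice of spaces would break the proof, so I would flag that inclusion as the load-bearing assumption.
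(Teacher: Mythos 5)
Your proposal is correct and coincides with the paper's own proof, which is likewise a line-by-line transfer of the continuous argument: it takes the discrete solution $w_{u_h,h} \in Y_h$ of \eqref{eqn:Definition w H-1 FEM Yh}, the same test pair $v_h = u_h + w_{u_h,h} + \alpha p_h \in Y_h$ and $q_h = \alpha u_h \in X_{0,h}$ (admissible exactly because $X_{0,h} \subseteq Y_h$), and the same constants, arriving at the identical bounds ${\mathcal{B}}(u_h,p_h;v_h,q_h) \geq \tfrac{1}{4}\big[\|u_h\|_{X_{0,h}}^2 + \|p_h\|_Y^2\big]$ and $\|q_h\|_{X_{0,h}}^2 + \|v_h\|_Y^2 \leq \tfrac{16}{\varrho^2}\big[\|u_h\|_{X_{0,h}}^2 + \|p_h\|_Y^2\big]$. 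You also correctly flag the one load-bearing point, namely that every test function used (in particular $w_{u_h,h}$ as a test function in \eqref{eqn:Definition w H-1 FEM Yh}) must lie in $Y_h$, which is exactly the role the inclusion $X_{0,h} \subseteq Y_h$ plays in the paper.
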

\begin{proof}
Since the proof follows the lines of the proof of Theorem
\ref{thm:inf sup H-1}, we only sketch the most important steps.

For $u_h \in X_{0,h}$, let $w_{u_h,h} \in Y_h$ be the unique finite element solution
of the variational problem (\ref{eqn:Definition w H-1 FEM Yh}). For
$p_h \in Y_h$, and due to $X_{0,h} \subset Y_h$, we then have
$v_h := u_h + w_{u_h,h} + \alpha p_h \in Y_h$, $\alpha \in {\mathbb{R}}_+$.
Moreover, set $q_h = \alpha u_h \in X_{0,h}$.

As in the proof of Theorem \ref{thm:inf sup H-1}, we now conclude
\[
{\mathcal{B}}(u_h,p_h;v_h,q_h) \geq \frac{1}{4} \, \Big[
\| p_h \|^2_Y + \| w_{u_h,h} \|_Y^2 + \| u_h \|_Y^2 \Big] =
\frac{1}{4} \, \Big[ \| u_h \|_{X_{0,h}}^2 + \| p_h \|^2_Y \Big] .
\]
On the other hand, and as in the proof of Theorem \ref{thm:inf sup H-1}, 
we have
\begin{eqnarray*}
\| q_h \|_{X_{0,h}}^2 + \| v_h \|_Y^2 & = & 
\alpha^2 \, \| u_h \|_{X_{0,h}}^2 + \| u_h + w_{u_h,h} + \alpha p_h \|_Y^2 
\\ & \leq &
\frac{16}{\varrho^2} \Big[ \| u_h \|_{X_{0,h}}^2 + \| p_h \|_Y^2 \Big] .
\end{eqnarray*}
Now the assertion follows as in the continuous case.
\end{proof}

\noindent
The discrete inf-sup condition (\ref{eqn:stability condition H-1 FEM})
implies unique solvability of the space-time finite element scheme
(\ref{eqn:VF Optimality H-1 FEM}). By combining (\ref{eqn:VF Optimality H-1
  FEM}) with (\ref{eqn:VF Optimality H-1}) 
and by using the inclusions $X_{0,h} \subset X_0$ and
$Y_h \subset Y$, we also conclude the Galerkin orthogonality
\begin{equation}\label{eqn:VF Optimality H-1 Galerkin}
{\mathcal{B}}(u-u_h,p-p_h;v_h, q_h) = 0,\quad \forall 
(v_h,q_h) \in Y_h \times X_{0,h}.
\end{equation}

\begin{theorem}\label{thm:h1conv}
Let $(u,p) \in X_0 \times Y$ and $(u_h,p_h) \in X_{0,h} \times Y_h$ 
be the unique solutions of the variational problems 
(\ref{eqn:VF Optimality H-1}) and (\ref{eqn:VF Optimality H-1 FEM}),
respectively, where  $X_{0,h} \subset X_0$ and $Y_h \subset Y$.
Furthermore, assume that $(u,p) \in H^2(Q) \times H^2(Q)$.
Then there holds the discretization error estimate
\[
\varrho \, \Big[  \| u - u_h \|_{X_{0,h}}^2 + 
\| p - p_h \|_Y^2 \Big]^{1/2} 
\leq c \, h \, \left[ 
\frac{1}{\varrho} \, \|p\|_{H^2(Q)} + \|u\|_{H^2(Q)} \right] \, .
\]
\end{theorem}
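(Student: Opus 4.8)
The plan is to prove the estimate by a standard Strang--C\'ea quasi-optimality argument built on the discrete stability of Lemma \ref{thm:inf sup H-1 FEM}, the Galerkin orthogonality \eqref{eqn:VF Optimality H-1 Galerkin}, and a termwise boundedness analysis of ${\mathcal B}(\cdot,\cdot;\cdot,\cdot)$. First I would fix convenient conforming approximants $z_h \in X_{0,h}$ of $u$ and $r_h \in Y_h$ of $p$, chosen as (quasi-)interpolants with the standard first-order approximation properties and, crucially, preserving the homogeneous temporal traces ($z_h = 0$ on $\Sigma_0$, automatic from $X_{0,h}$, and $r_h = 0$ on $\Sigma_T$, inherited from $p|_{\Sigma_T}=0$ in \eqref{eqn:adjointequation}). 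Writing $u - u_h = (u - z_h) + (z_h - u_h)$ and $p - p_h = (p - r_h) + (r_h - p_h)$, the triangle inequality in $\mathbb{R}^2$ reduces the task to (i) the approximation errors $[\|u - z_h\|_{X_{0,h}}^2 + \|p - r_h\|_Y^2]^{1/2}$, which are $O(h)$ by interpolation, and (ii) the discrete error $[\|z_h - u_h\|_{X_{0,h}}^2 + \|r_h - p_h\|_Y^2]^{1/2}$, to which I apply the discrete inf-sup condition \eqref{eqn:stability condition H-1 FEM}.

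Since $(z_h - u_h, r_h - p_h) \in X_{0,h} \times Y_h$ is discrete, Lemma \ref{thm:inf sup H-1 FEM} bounds its norm by $\tfrac{16}{\varrho}\sup_{(v_h,q_h)}{\mathcal B}(z_h - u_h, r_h - p_h; v_h, q_h)/D$ with $D := [\|v_h\|_Y^2 + \|q_h\|_{X_{0,h}}^2]^{1/2}$, and Galerkin orthogonality \eqref{eqn:VF Optimality H-1 Galerkin} lets me replace the discrete argument by the computable errors, i.e.\ ${\mathcal B}(z_h - u_h, r_h - p_h; v_h, q_h) = {\mathcal B}(u - z_h, p - r_h; v_h, q_h)$. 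It then remains to bound each of the four terms of \eqref{eqn:def bilinear form B H-1} with $e_u := u - z_h$, $e_p := p - r_h$ against $D$. Three of them are routine: $\tfrac{1}{\varrho}a(e_p,v_h) \le \tfrac1\varrho\|e_p\|_Y D$ by Cauchy--Schwarz; $c(e_u,q_h) \le \|e_u\|_{L^2(Q)}\|q_h\|_{L^2(Q)} \le C\|e_u\|_{L^2(Q)}D$ using the spatial Poincar\'e inequality $\|q_h\|_{L^2(Q)}\le C\|q_h\|_Y \le C\|q_h\|_{X_{0,h}}$; and $b(e_u,v_h) \le \sqrt2\,\|e_u\|_{X_{0,h}}D$, where the bound in the \emph{discrete} norm is legitimate because $v_h \in Y_h$, so that $\int_Q \partial_t e_u\,v_h\,dx\,dt = \int_Q \nabla_x w_{e_u,h}\cdot\nabla_x v_h\,dx\,dt$ by the defining relation \eqref{eqn:Definition w H-1 FEM Yh}.

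The hard part will be the remaining term $-b(q_h, e_p)$, in which the time derivative falls on the discrete test function $q_h \in X_{0,h}$ but is paired against the non-discrete error $e_p = p - r_h \notin Y_h$. The naive estimate $|b(q_h,e_p)| \le \sqrt2\,\|q_h\|_X\|e_p\|_Y$ from \eqref{eqn:boundedness_b(.,.)} is useless here, because the \emph{continuous} norm $\|q_h\|_X$ is not controlled by the discrete norm $\|q_h\|_{X_{0,h}}$ appearing in $D$ uniformly in $h$; equivalently, the identity $\int_Q\partial_t q_h\,(\cdot)\,dx\,dt = \int_Q\nabla_x w_{q_h,h}\cdot\nabla_x(\cdot)\,dx\,dt$ is only available against arguments lying in $Y_h$. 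My plan to overcome this is to integrate by parts in time in the offending contribution $\int_Q \partial_t q_h\,e_p\,dx\,dt = \int_\Omega q_h(T)\,e_p(T)\,dx - \int_Q q_h\,\partial_t e_p\,dx\,dt$; the boundary term at $t=0$ vanishes since $q_h\in X_{0,h}$ satisfies $q_h(0)=0$, and the boundary term at $t=T$ vanishes precisely because $r_h$ was chosen to inherit $p|_{\Sigma_T}=0$, so that $e_p(T) = p(T) - r_h(T) = 0$. What remains, $-\int_Q q_h\,\partial_t e_p\,dx\,dt - \int_Q \nabla_x q_h\cdot\nabla_x e_p\,dx\,dt$, is then bounded by $(C\|\partial_t e_p\|_{L^2(Q)} + \|e_p\|_Y)\,\|q_h\|_{X_{0,h}}$, where $\partial_t e_p = \partial_t(p - r_h) \in L^2(Q)$ by the assumed $H^2$-regularity of $p$; both factors carry an $O(h)$ rate against $|p|_{H^2(Q)}$.

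Finally I would assemble the pieces. Collecting the four bounds, grouping all $e_p$-contributions (which may be gathered under the factor $\varrho^{-1}$ since $\varrho \le 1$) and all $e_u$-contributions, and inserting the $O(h)$ interpolation estimates $\|e_u\|_{X_{0,h}} + \|e_u\|_{L^2(Q)} \le c\,h\,|u|_{H^2(Q)}$ and $\|e_p\|_Y + \|\partial_t e_p\|_{L^2(Q)} \le c\,h\,|p|_{H^2(Q)}$ yields $\sup_{(v_h,q_h)}{\mathcal B}(e_u,e_p;v_h,q_h)/D \le c\,h\,[\varrho^{-1}\|p\|_{H^2(Q)} + \|u\|_{H^2(Q)}]$. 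Multiplying by $16/\varrho$ from Lemma \ref{thm:inf sup H-1 FEM} bounds the discrete error part (ii), the approximation error part (i) is of the same or lower order in $\varrho^{-1}$, and multiplying the resulting total by $\varrho$ produces exactly the claimed estimate; I should keep careful track of the two explicit factors of $\varrho^{-1}$, one from the stability constant and one from the $\tfrac1\varrho a(e_p,v_h)$ term, in order to match the right-hand side.
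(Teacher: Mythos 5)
Your proposal is correct, and its skeleton coincides with the paper's own proof: the splitting into approximation and discrete errors, the application of the discrete inf-sup condition of Lemma \ref{thm:inf sup H-1 FEM} to $(z_h-u_h,r_h-p_h)$, the Galerkin orthogonality \eqref{eqn:VF Optimality H-1 Galerkin}, the routine bounds on the $a$-, $b(u-z_h,\cdot)$- and $c$-terms, and the integration by parts in time (using $q_h=0$ on $\Sigma_0$) to tame $b(q_h,p-r_h)$ --- you even identified the precise reason the naive bound \eqref{eqn:boundedness_b(.,.)} fails, namely that $\|q_h\|_X$ is not uniformly controlled by $\|q_h\|_{X_{0,h}}$. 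Where you genuinely diverge is in the disposal of the terminal boundary term $\int_\Omega q_h(T)\,(p(T)-r_h(T))\,dx$ created by that integration by parts. The paper takes $r_h=\overline{P}_h p$ to be the $H^1(Q)$ projection, so this term does not vanish; it is then handled by a local inverse-type trace inequality $\|q_h(T)\|_{L^2(\Omega)}\le c\,h_T^{-1/2}\,\|q_h\|_Y$, derived from piecewise-linear norm equivalences, which must be compensated by the $O(h^{3/2})$ trace approximation of $\|p(T)-r_h(T)\|_{L^2(\Omega)}$ hidden in the reference to the proof of Theorem 3.3 in \cite{OS15}; this costs a quasi-uniformity assumption on the mesh along $\Sigma_T$. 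You instead annihilate the term by choosing $r_h$ as a quasi-interpolant preserving the homogeneous terminal trace $p|_{\Sigma_T}=0$, which is legitimate under the assumed $H^2$-regularity (the variational solution then satisfies the strong terminal condition in \eqref{eqn:adjointequation}), and which must indeed be of Scott--Zhang type rather than nodal, since nodal interpolation of $H^2$ functions is unavailable in the four-dimensional case $d=3$. This buys a cleaner argument --- no inverse inequality, no mesh condition near $\Sigma_T$, and only the plain $O(h)$ bounds $\|p-r_h\|_Y+\|\partial_t(p-r_h)\|_{L^2(Q)}\le c\,h\,|p|_{H^2(Q)}$ --- at the price of invoking the (standard) existence of a boundary-condition-preserving quasi-interpolant with optimal approximation on the space-time simplicial mesh. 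Your bookkeeping of the two factors $\varrho^{-1}$, and the implicit use of $\varrho\le 1$ required by Lemma \ref{thm:inf sup H-1 FEM}, match the paper's final assembly.
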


\begin{proof}
For arbitrary $(z_h,r_h) \in X_{0,h} \times Y_h$, 
the discrete inf-sup condition (\ref{eqn:stability condition H-1 FEM}) 
and the Galerkin orthogonality (\ref{eqn:VF Optimality H-1 Galerkin})
immediately yield the estimates
\begin{eqnarray*}
\frac{1}{16} \, \varrho \, \Big[  \| u_h - z_h \|_{X_{0,h}}^2 + 
\| p_h - r_h \|_Y^2 \Big]^{1/2} 
&  \\ && \hspace*{-4cm} \leq \,
\sup\limits_{0 \neq (v_h,q_h) \in Y_h \times X_{0,h}}
\frac{{\mathcal{B}}(u_h-z_h,p_h-r_h;v_h,q_h)}{[  \| v_h \|_Y^2 + 
\| q_h \|_{X_{0,h}}^2 ]^{1/2}} \\ && \hspace*{-4cm} = \,
\sup\limits_{0 \neq (v_h,q_h) \in Y_h \times X_{0,h}}
\frac{{\mathcal{B}}(u-z_h,p-r_h;v_h,q_h)}{[  \| v_h \|_Y^2 + 
\| q_h \|_{X_{0,h}}^2 ]^{1/2}}.
\end{eqnarray*}
Now we consider
\begin{eqnarray*}
\frac{{\mathcal{B}}(u-z_h,p-r_h;v_h,q_h)}{[  \| v_h \|_Y^2 + 
\| q_h \|_{X_{0,h}}^2 ]^{1/2}} &  \\ && \hspace*{-4cm} = \,
\frac{\frac{1}
{\varrho}a(p-r_h,v_h) + b(u-z_h,v_h) - b(q_h,p-r_h) + c(u-z_h,q_h) }
{[  \| v_h \|_Y^2 + \| q_h \|_{X_{0,h}}^2 ]^{1/2}} \\ && \hspace*{-4cm} \leq \,
\frac{\frac{1}{\varrho}a(p-r_h,v_h) + b(u-z_h,v_h)}{\| v_h \|_Y}
- \frac{b(q_h,p-r_h)}{[\| v_h \|_Y^2 + \| q_h \|_{X_{0,h}}^2 ]^{1/2}}
+
\frac{c(u-z_h,q_h) }{\| q_h \|_{X_{0,h}}} \\ & & \hspace*{-4cm} \leq \,
\frac{1}{\varrho} \, \| p - r_h \|_Y + \sqrt{2} \, \| u - z_h \|_{X_0} +
c \, \| u - z_h \|_{L^2(Q)} 
- \frac{b(q_h,p-r_h)}{[\| v_h \|_Y^2 + \| q_h \|_{X_{0,h}}^2 ]^{1/2}} .
\end{eqnarray*}
Integrating by parts in time and using $q_h=0$ in $\Sigma_0$, we obtain
\begin{eqnarray*}
b(q_h,p-r_h) & = &
\int_Q \Big[
\partial_t q_h \, (p - r_h) + \nabla_x q_h \cdot \nabla_x (p-r_h)
\Big] \, dx \, dt \\ & & \hspace*{-2.7cm} = \,
\int_\Omega q_h(T) (p(T)-r_h(T)) dx +
\int_Q \Big[
- q_h \, \partial_t (p - r_h) + \nabla_x q_h \cdot \nabla_x (p-r_h)
\Big] dx \, dt \\ & & \hspace*{-2.7cm} \leq \,
\| q_h(T) \|_{L^2(\Omega)} \| p(T)-r_h(T) \|_{L^2(\Omega)} +
\sqrt{2} \| q_h \|_Y \Big[
\| \partial_t(p-r_h) \|_{Y^*} + \| p - r_h \|_Y
\Big].
\end{eqnarray*}
Therefore, the inequalities
\begin{eqnarray*}
\frac{b(q_h,p-r_h)}{[\| v_h \|_Y^2 + \| q_h \|_{X_{0,h}}^2 ]^{1/2}} & \leq &
 \frac{b(q_h,p-r_h)}{\| q_h \|_Y } \\ & & \hspace*{-4cm} \leq \, 
\frac{\| q_h(T) \|_{L^2(\Omega)}}{\| q_h \|_Y} \| p(T)-r_h(T) \|_{L^2(\Omega)} +
\sqrt{2} \Big[
\| \partial_t(p-r_h) \|_{Y^*} + \| p - r_h \|_Y
\Big]
\end{eqnarray*}
follow. Let $\tau_\ell^\mu \subset {\mathbb{R}^\mu}$ with $\mu=d$ or $\mu=d+1$
be a shape regular simplicial finite element with mesh size $h_\ell$. 
For a piecewise linear finite element function, we then have the equivalence
\[
\int_{\tau_\ell^\mu} [v_h(x)]^2 \, dx \simeq 
h_\ell^\mu \sum\limits_{k=1}^{\mu+1} v_{\ell_k}^2,
\]
where the $v_{\ell_k}$ are the local nodal values of $v_h$.
Hence, we can write
\[
\| q_h(T) \|^2_{L^2(\Omega)} = \sum\limits_{\tau_\ell^n \in \Sigma_T} 
\| q_h(T) \|^2_{L^2(\tau_\ell^n)} \simeq \sum\limits_{\tau_\ell^n \in \Sigma_T}
h_\ell^n \sum\limits_{k=1}^{n+1} v_{\ell_k}^2 
\]
as well as
\[
\| q_h \|^2_{L^2(Q)} = \sum\limits_{\tau_\ell^{n+1} \in Q} 
\| v_h \|^2_{L^2(\tau_\ell^{n+1})} \simeq \sum\limits_{\tau_\ell^{n+1} \in Q} 
h_\ell^{n+1} \sum\limits_{k=1}^{n+2} v_{\ell_k}^2 \, . 
\]
Thus, we conclude
\[
\| q_h(T) \|^2_{L^2(\Omega)} \leq c \, h_T^{-1} \, \| q_h \|^2_{L^2(Q)}, 
\]
where $h_T \simeq h_\ell$ is the globally quasi-uniform mesh size 
of all space-time finite elements sharing $\Sigma_T$. Since we have
$q_h =0$ on $\Sigma$, we finally obtain
\[
\| q_h(T) \|_{L^2(\Omega)} \leq c \, h_T^{-1/2} \, \| \nabla_x q_h \|_{L^2(Q)} 
= c \, h_T^{-1/2} \, \| q_h \|_Y \, .
\]
When summarizing all the previous steps, this gives
\begin{eqnarray*}
\frac{1}{16} \, \varrho \, \Big[  \| u_h - z_h \|_{X_{0,h}}^2 + 
\| p_h - r_h \|_Y^2 \Big]^{1/2} 
&  \\ && \hspace*{-6cm} \leq \,
\Big( \sqrt{2} + \frac{1}{\varrho} \Big) \, 
\| p - r_h \|_Y + \sqrt{2} \, \| u - z_h \|_{X_0} +
c \, \| u - z_h \|_{L^2(Q)} \\
&& \hspace*{-5cm}
+ c \, h_T^{-1/2} \, \| p(T)-r_h(T) \|_{L^2(\Omega)} +
\sqrt{2} \, \| \partial_t(p-r_h) \|_{Y^*} \, .
\end{eqnarray*}
As in \cite{OS15}, we may now chose $z_h = P_h u \in X_{0,h}$ and 
$r_h = \overline{P}_h p \in Y_h$ being the related $H^1(Q)$ projections.
Using standard arguments, see, e.g. the proof of Theorem 3.3 in
\cite{OS15}, we finally obtain
\[
\frac{1}{16} \, \varrho \, \Big[  \| u_h - z_h \|_{X_{0,h}}^2 + 
\| p_h - r_h \|_Y^2 \Big]^{1/2} 
\leq c \, h \, \left[ 
\frac{1}{\varrho} \, \|p\|_{H^2(Q)} + \|u\|_{H^2(Q)} \right] \, .
\]
The assertion now follows when applying the triangle inequality
and once again the approximation properties in $X_{0,h}$ and $Y_h$,
respectively.
\end{proof}

\section{Numerical results}\label{Section:Numerical Examples}
In our numerical experiments, we consider examples in both two and three
space dimensions. In the two-dimensional case, we consider 
$\Omega=(0,1)^2$, $T=1$, and therefore $Q=(0,1)^3$. The coarsest space-time 
mesh contains $125$ vertices and $384$ tetrahedral elements with the mesh 
size $h=1/4$. By a uniform red-green refinement \cite{JB95}, we reduce 
the mesh size recursively, i.e., $h=1/8$, $1/16$ and so on. In this case,
the numerical examples are tested on a desktop with Intel@ Xeon@ Processor
E5-1650 v4 ($15$ MB Cache, $3.60$ GHz), and $64$ GB memory.   

In three space dimensions, we set $\Omega=(0, 1)^3$, $T=1$, and
therefore $Q=(0, 1)^4$. We start from an initial mesh containing $178$
vertices and $960$ pentatopes with a mesh size $h\approx 1$. Following a
bisection approach \cite{RS08}, we perform a sequence of uniform refinements 
of the initial mesh. In this case, the numerical examples are tested on
a compute node with two $20$-core Intel Broadwell Processors (Xeon E5-2698v4,
$2.2$ Ghz) and $1$ TB memory. 

For the solution of the discrete first-order optimality system,
we use an algebraic multigrid preconditioned GMRES method with a relative 
residual error reduction $\varepsilon=10^{-8}$ as a stopping criterion.
We refer to \cite{OSHY19} for more details on
constructing the algebraic multigrid preconditioner and the performance study
for solving such a coupled system.  

\subsection{An example with explicitly known solution}\label{sec:2dexm}
In order to check the convergence rates, we first consider an example with
an explicitly known solution of the first-order optimality system,
i.e., for $d=2$,
\begin{equation*}
  \begin{aligned}
    u(x,t)&=2\pi^2\sin(\pi x_1)\sin(\pi x_2)\left(ct^2+t\right),\\
    p(x,t)&=-\varrho\sin(\pi x_1)\sin(\pi x_2)\left( at^2+bt+1\right),\\
    z(x,t)&=2\pi^2\sin(\pi x_1)\sin(\pi x_2)\left( at^2+bt+1\right),
  \end{aligned}
\end{equation*}
where 
\[
a=-\frac{4\pi^4+2\pi^2}{2\pi^2+2}, \quad
b=\frac{4\pi^4-2}{2\pi^2+2}, \quad
c=-\frac{2\pi^2+1}{2\pi^2+2}.
\]
The regularization parameter is set to $\varrho=0.01$. By definition,
$u$ fulfills the homogeneous initial and boundary conditions for the 
state equation, while $p$ satisfies the homogeneous terminal and 
boundary conditions for the adjoint equation, see the illustration 
in Fig.~\ref{fig:energysol_p21}. The numerical results are given in
Table \ref{tab:eocl2h1_p21}, where we present the errors for the 
approximate solutions $u_h$ and $p_h$ in $Y=L^2(0,T; H^1_0(\Omega))$.
The estimated order of convergence (eoc) corresponds 
to the estimate as given in Theorem \ref{thm:h1conv}.
Further, we observe a nearly optimal concergence rate in
$L^2(Q)$, see Table \ref{tab:eocl2l2_p21}.
Finally, a second-order convergence rate of the objective functional
is observed, see Table \ref{tab:l2_eocobj_p21}. 

\begin{figure}
  \centering
  \includegraphics[width=0.32\textwidth]{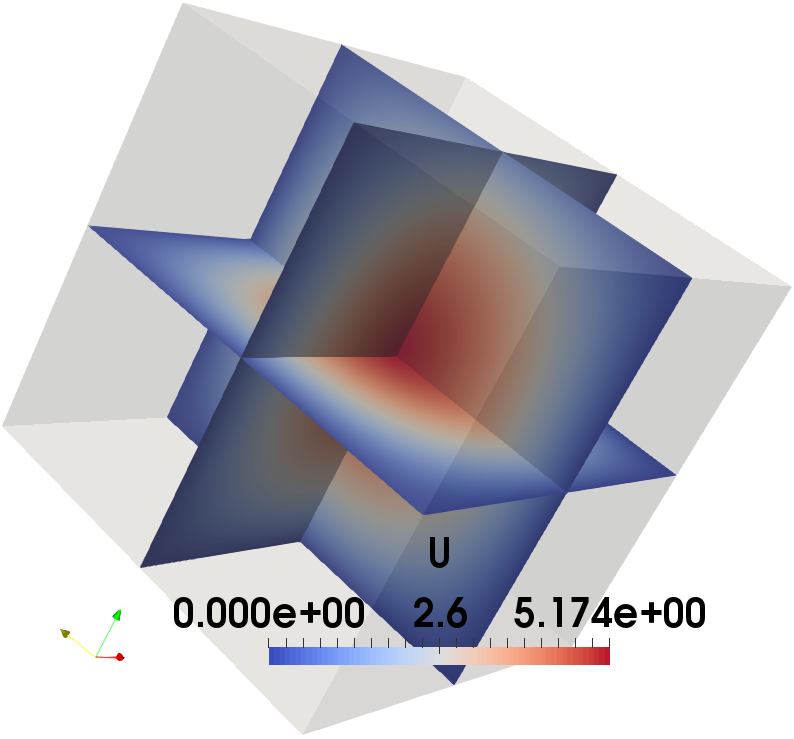}
  \includegraphics[width=0.32\textwidth]{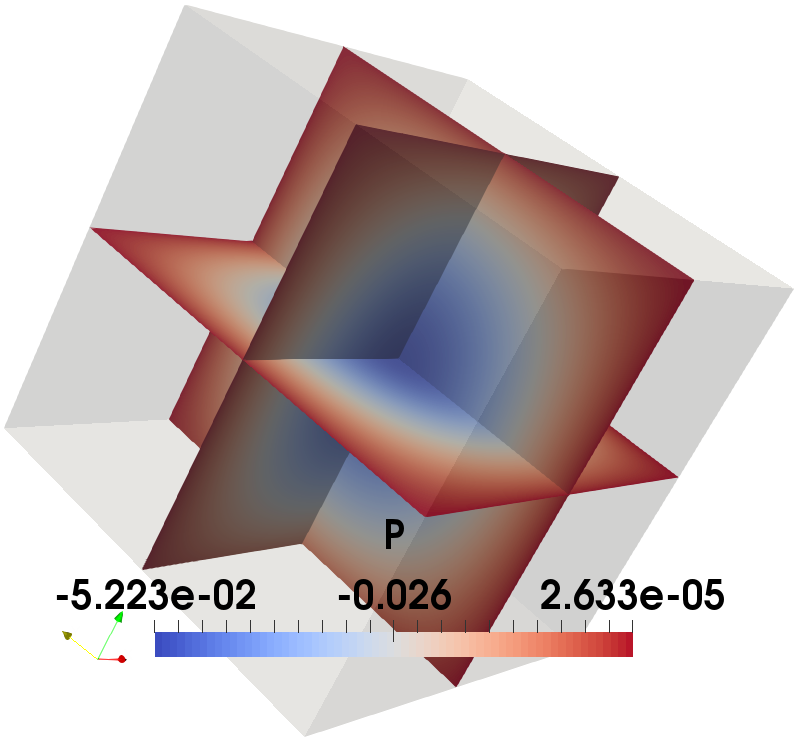}
  \includegraphics[width=0.32\textwidth]{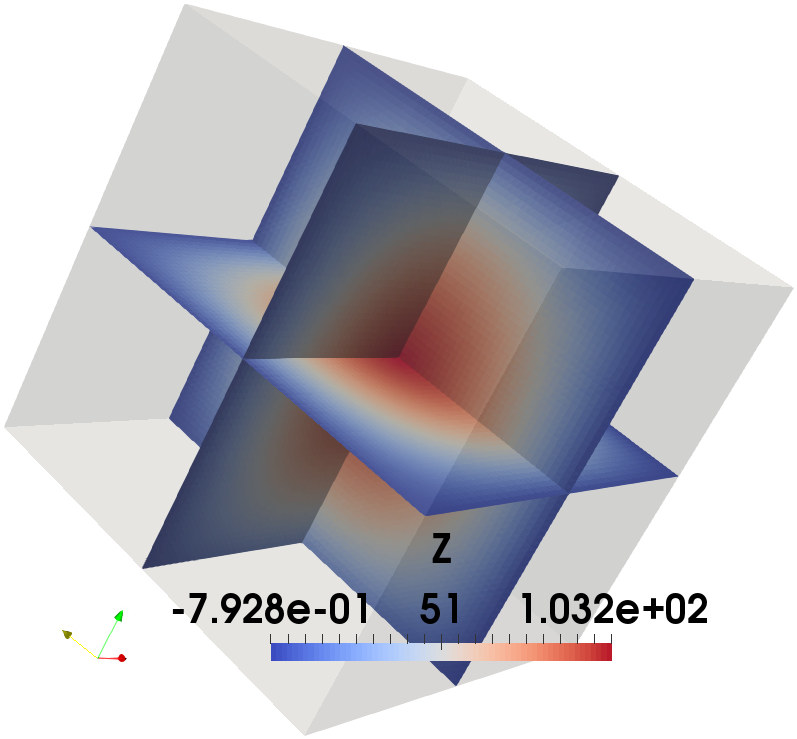}
  \caption{Example 1, numerical solutions of $u$, $p$, and $z$ for the linear
    model problem using energy regularization.}
  \label{fig:energysol_p21}
\end{figure} 

\begin{table}[h]\caption{Example 1, estimated order of convergence (eoc) 
of $u_h$ and $p_h$ in $Y$.} 
\centering
\begin{tabular}{rrrrrr}
\toprule
\#Dofs & $h$ & $\|u-u_h\|_Y$ & eoc & $\|p-p_h\|_Y$ & eoc  \\
\midrule
$250$ & $1/4$& $4.611e-0$    & $-$  & $4.651e-2$    & $-$       \\
$1,458$ & $1/8$& $2.303e-0$    & $1.002$  &$2.313e-2$    & $1.008$  \\
$9,826$ & $1/16$ & $1.129e-0$    & $1.028$ & $1.134e-2$    & $1.028$  \\
$71,874$ & $1/32$ & $5.572e-1$    & $1.019$ & $5.601e-3$    & $1.018$   \\
$549,250$ & $1/64$ &$2.766e-1$ & $1.010$ & $2.783e-3$ & $1.009$ \\
$2,146,689$ & $1/128$ &$1.379e-1$    & $1.005$ & $1.388e-3$    & $1.004$  \\
\bottomrule
\end{tabular}\label{tab:eocl2h1_p21}
\end{table}

\begin{table}[h]\caption{Example 1, estimated order of convergence (eoc) 
of $u_h$ and $p_h$ in $L^2(Q)$.} 
\centering
\begin{tabular}{rrrrrr}
\toprule
\#Dofs &  $h$ & $\|u-u_h\|_{L^2(Q)}$ & eoc & $\|p-p_h\|_{L^2(Q)}$ & eoc   \\
\midrule
$250$ & $1/4$& $2.365e-1$    & $-$  & $2.829e-3$    & $-$   \\
$1,458$ & $1/8$& $5.685e-2$    & $2.057$  &$7.693e-4$    & $1.879$  \\
$9,826$ & $1/16$ & $1.410e-2$    & $2.011$ & $2.171e-4$   & $1.825$  \\
$71,874$ & $1/32$ & $3.642e-3$    & $1.953$ & $6.063e-5$    & $1.841$ \\
$549,250$ &  $1/64$ &$9.992e-4$    & $1.876$ & $1.655e-5$    & $1.873$ \\
$2,146,689$ &  $1/128$ &$2.759e-4$    & $1.847$ & $5.415e-6$    & $1.612$ \\
\bottomrule
\end{tabular}\label{tab:eocl2l2_p21}
\end{table}

\begin{table}[h]\caption{Example 1, $J(u_h,z_h)$, $|J(u_h,z_h)-J(u,z)|$,
$J(u,z)=4.53541e-1$.} 
\centering
\begin{tabular}{rrrrr}
\toprule
\#Dofs &  $h$ & $J(u_h, z_h)$ &  $|J(u_h, z_h)-J(u,z)|$ &  eoc   \\
\midrule
$250$ & $1/4$& $5.87348e-1$      & $1.3381e-1$    & $-$   \\
$1,458$ & $1/8$& $4.81288e-1$     &$2.7747e-2$    & $2.270$  \\
$9,826$ & $1/16$ & $4.59930e-1$    & $6.3890e-3$    & $2.119$  \\
$71,874$ & $1/32$ & $4.55054e-1$   & $1.5130e-3$    & $2.078$ \\
$549,250$ &  $1/64$ &$4.53863e-1$   & $3.2200e-4$    & $2.232$ \\
$2,146,689$ &  $1/128$ &$4.53557e-1$   & $1.6000e-5$    & $4.331$ \\
\bottomrule
\end{tabular}\label{tab:l2_eocobj_p21}
\end{table}

\subsection{An example with a discontinuous target}
As in the previous example, we have $\Omega = (0,1)^2$, $T=1$, i.e.,
$Q=(0,1)^3$, but now we consider the discontinuous target function
\begin{equation*}
  u_d(x,t) =
  \begin{cases}
    &1\quad\text{ if }
    \sqrt{(x_1-\frac{1}{2})^2+(x_2-\frac{1}{2})^2+(t-\frac{1}{2})^2}
    \leq\frac{1}{4},\\
    &0\quad \textup{ else.}
  \end{cases}
\end{equation*}
Here, the regularization parameter is set to $\varrho=10^{-4}$. Following the 
approach described in \cite{OSHY18}, we have used a residual based 
error indicator to drive an adaptive mesh refinement. The space-time finite 
element solutions for the state $u$ and the adjoint $p$ are 
provided in Fig. \ref{fig:ex4sol} in comparison with the time-dependent 
target $u_d$. The control $z$ is then reconstructed from
\eqref{eqn:normdefinition}, \eqref{eqn:adjointequation}, and 
\eqref{eqn:gradientequation H-1} by an $L^2$ projection on the space of 
element-wise constant functions.
More precisely, we look for an element-wise constant control $z_h$ such that 
\[
\langle z_h,\varphi_h\rangle_{L^2(Q)} = 
-\frac{1}{\varrho}\langle \partial_t p_h + u_h -u_d, \varphi_h \rangle_{L^2(Q)}
\]
holds for all element-wise constant test functions $\varphi_h$. The results are
given
in the last column of Fig. \ref{fig:ex4sol}. We clearly see that the control 
is concentrated near the interface, where the target exhibits a jump. The
adaptive mesh is illustrated in Fig. \ref{fig:ex4mesh} at the $78$th refining
step,  which contains $3,398,213$ grid points. The total number of degrees of
freedom for the coupled state and adjoint equation is $6,796,426$.   

\begin{figure}
  \centering
  \includegraphics[width=0.24\textwidth]{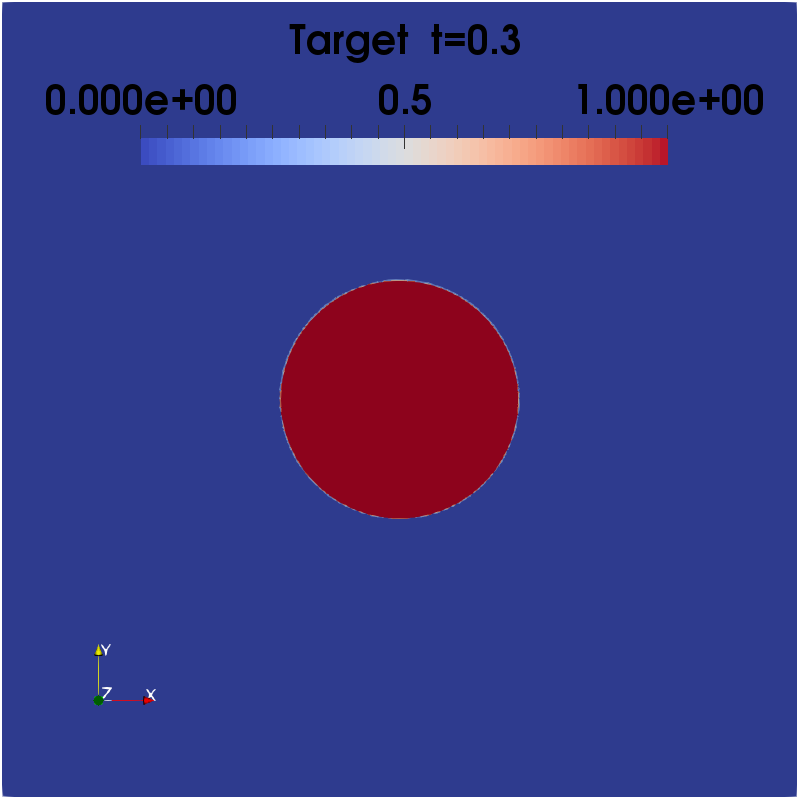}
  \includegraphics[width=0.24\textwidth]{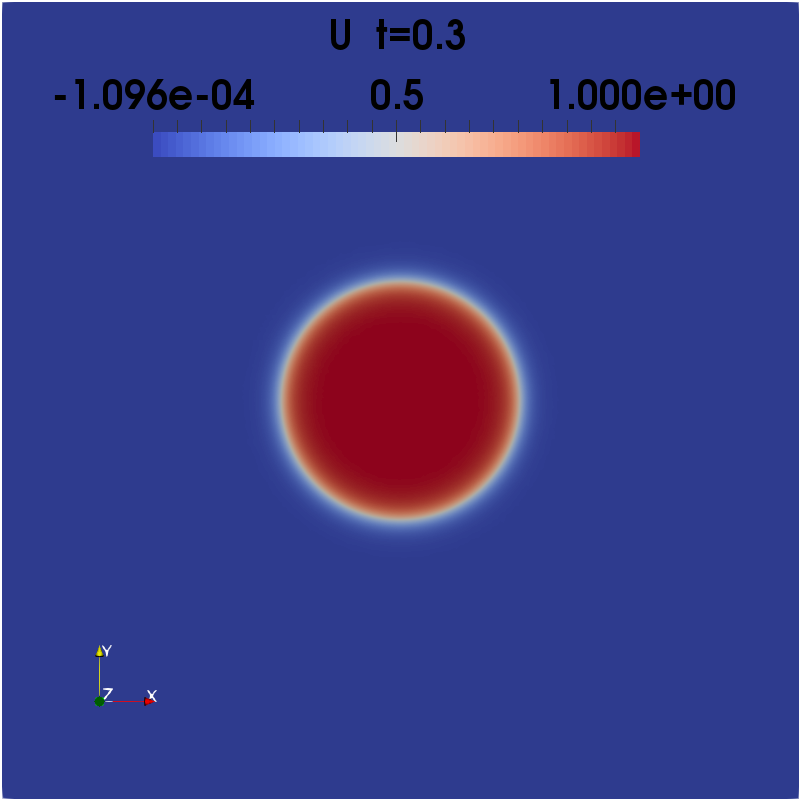}
  \includegraphics[width=0.24\textwidth]{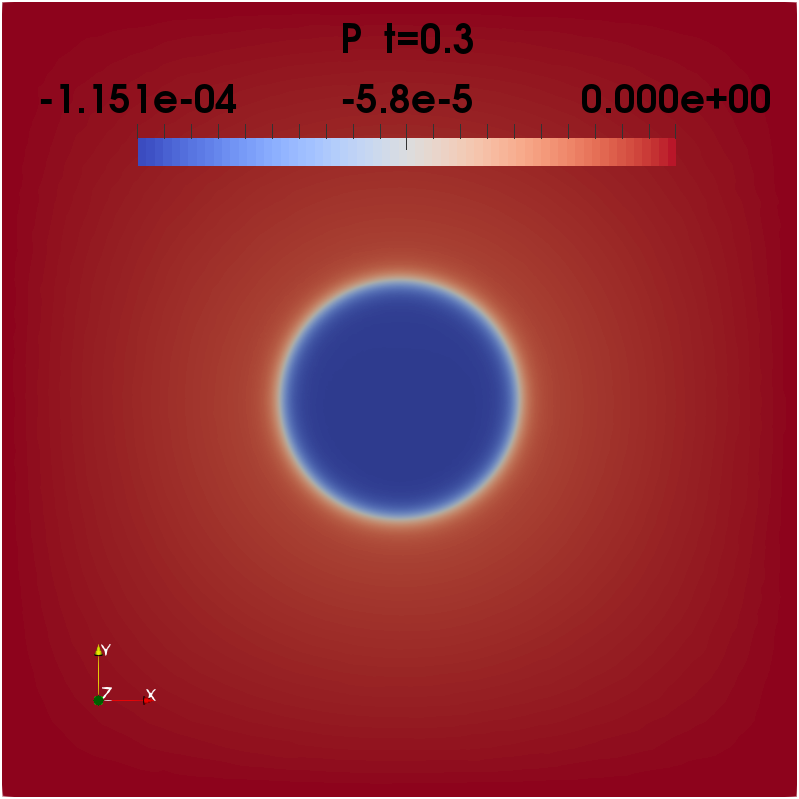}
  \includegraphics[width=0.24\textwidth]{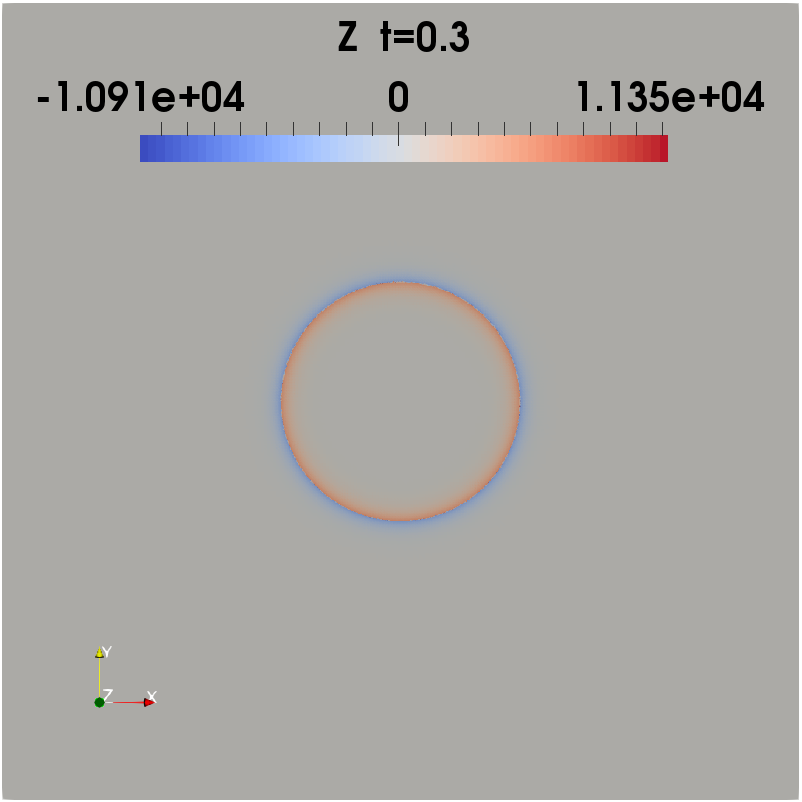}
  \includegraphics[width=0.24\textwidth]{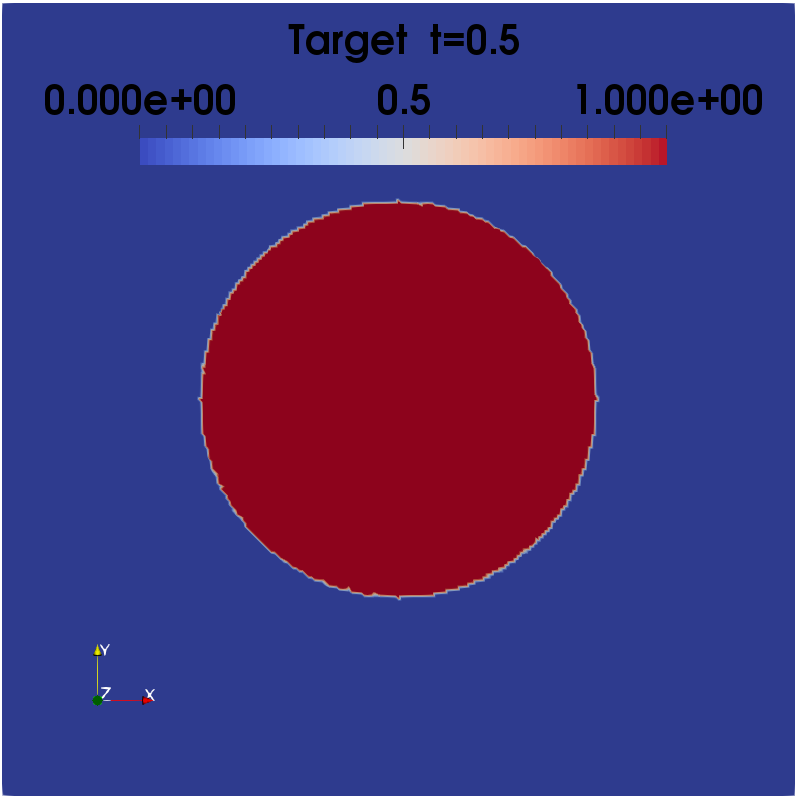}
  \includegraphics[width=0.24\textwidth]{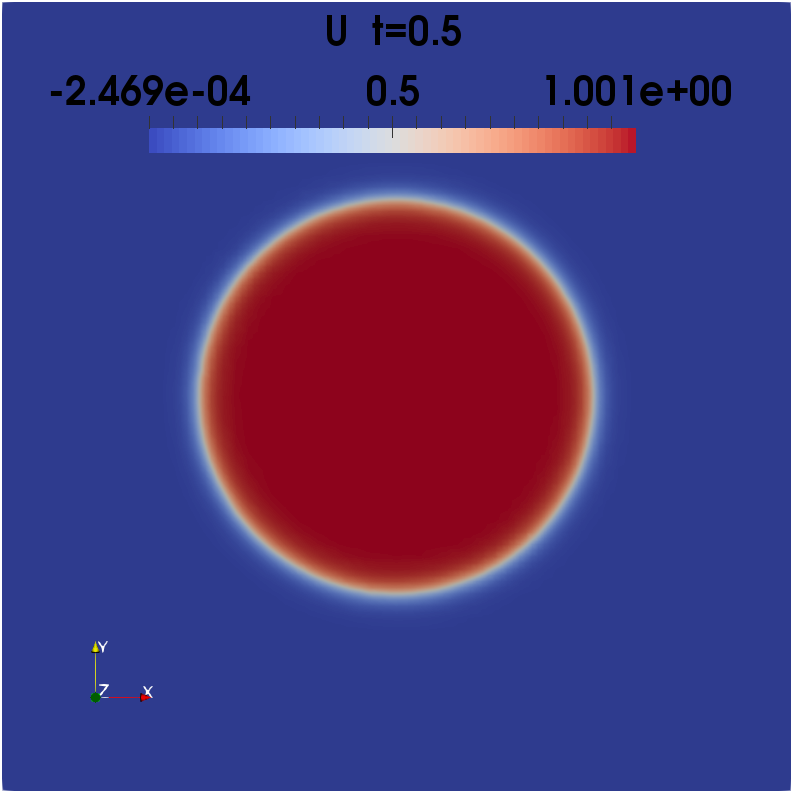}
  \includegraphics[width=0.24\textwidth]{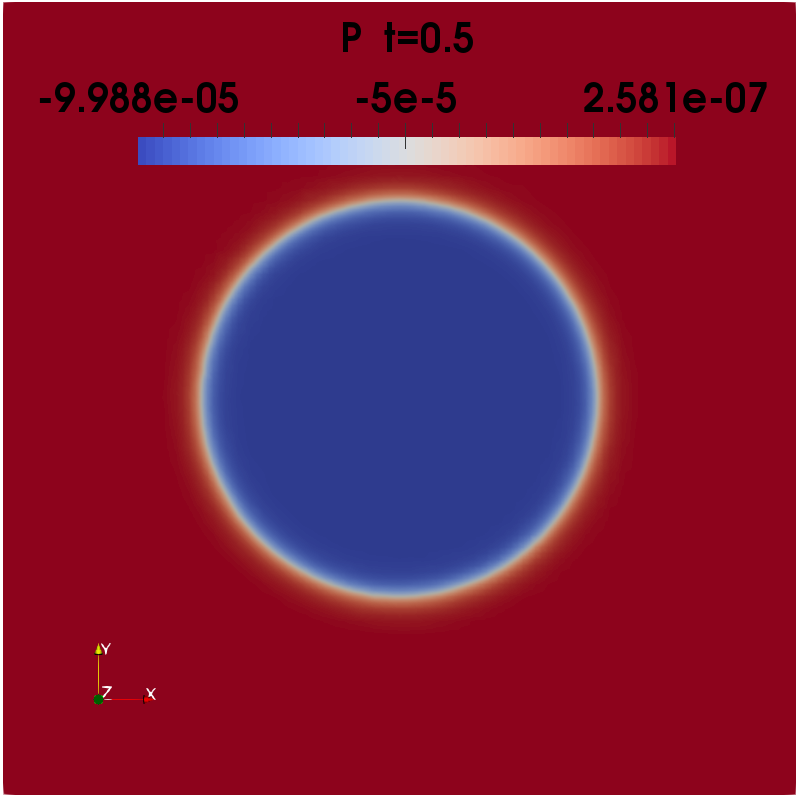}
  \includegraphics[width=0.24\textwidth]{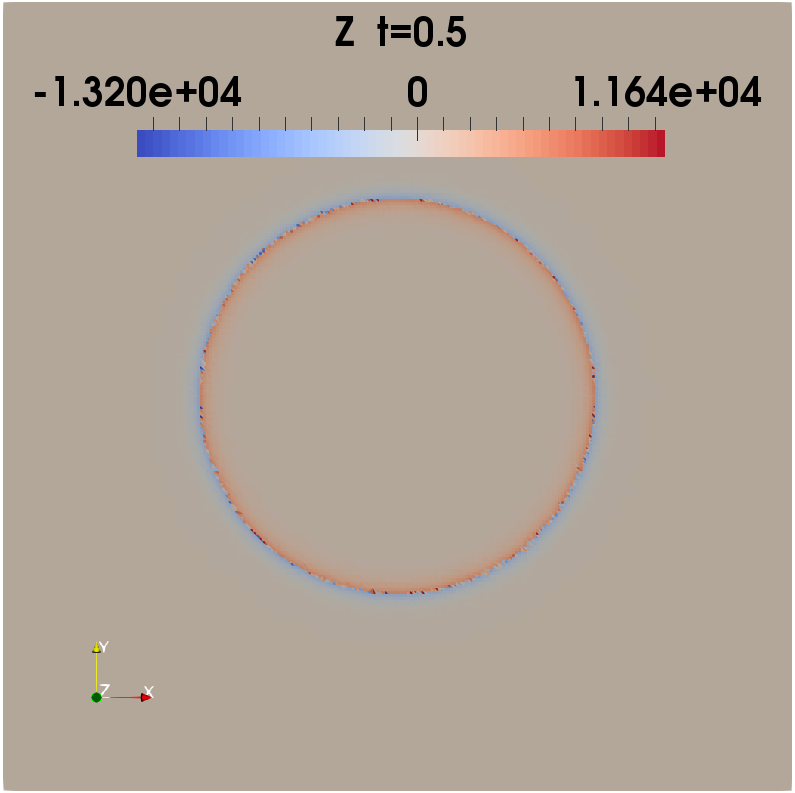}
  \includegraphics[width=0.24\textwidth]{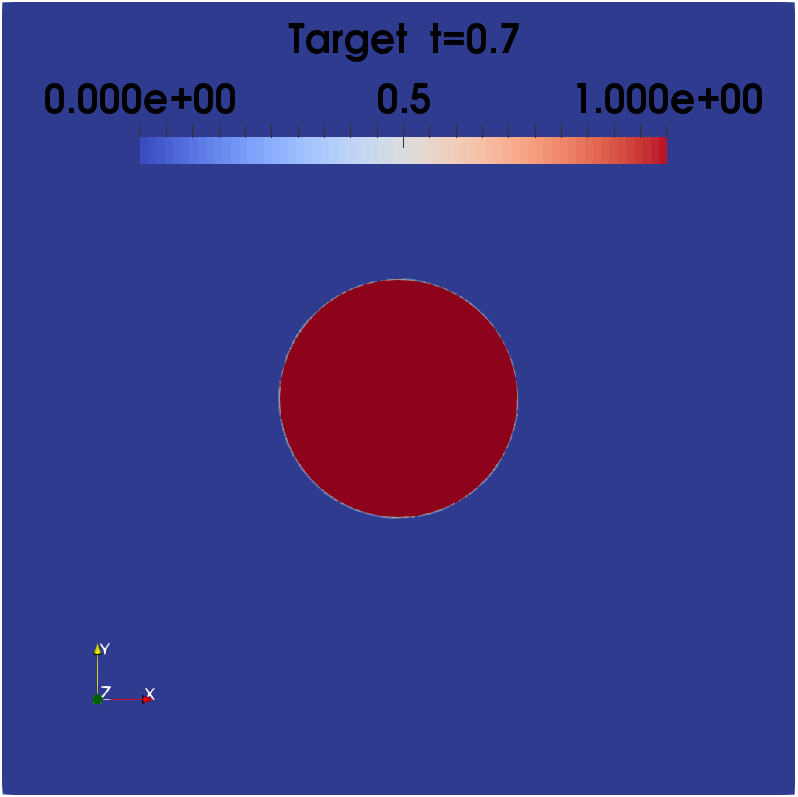}
  \includegraphics[width=0.24\textwidth]{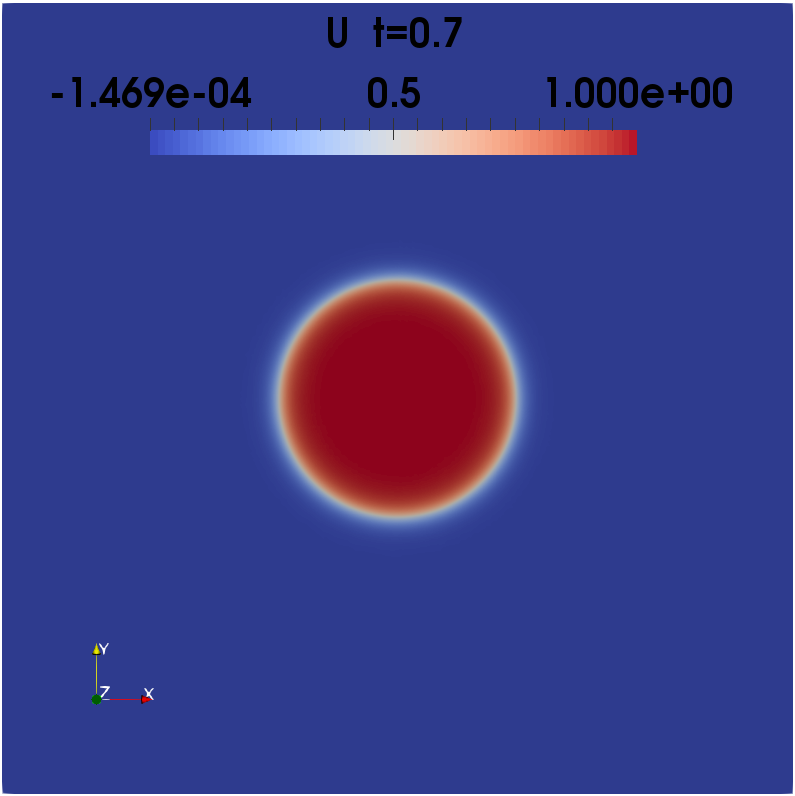}
  \includegraphics[width=0.24\textwidth]{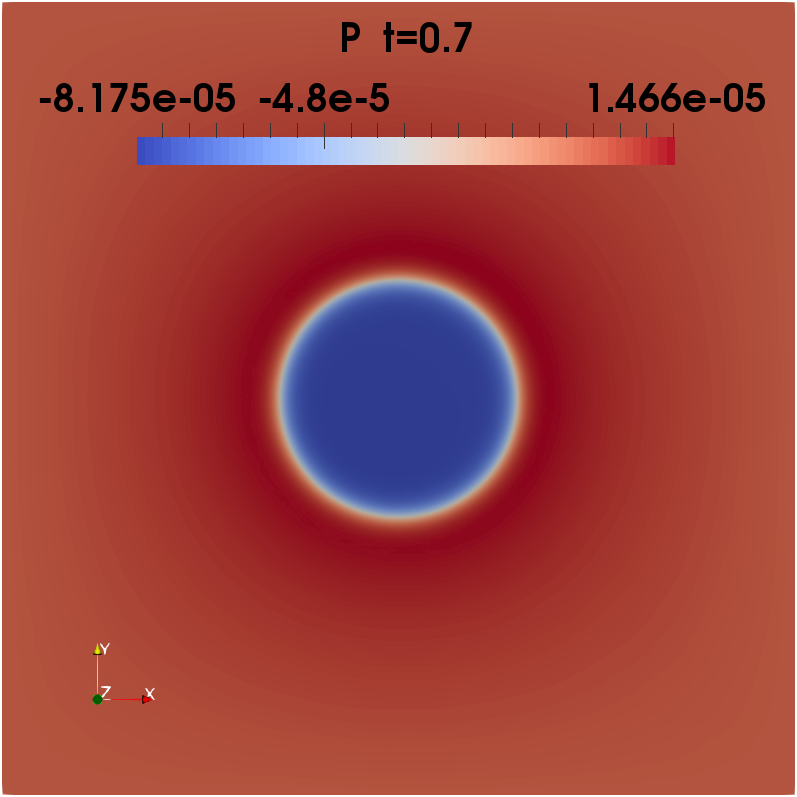}
  \includegraphics[width=0.24\textwidth]{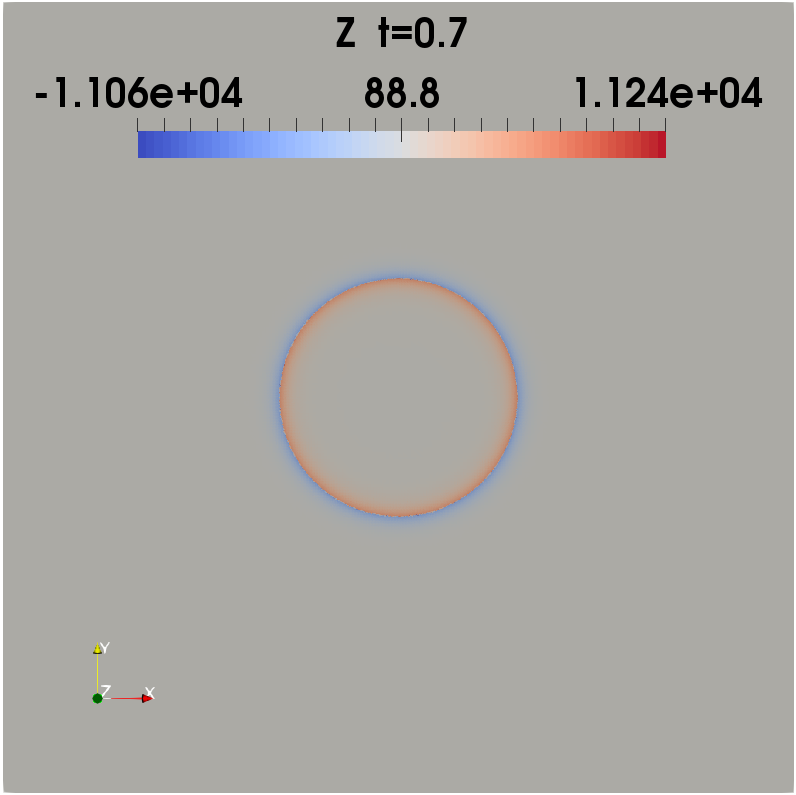}
  \caption{Example 2, Target $u_d$ and numerical solutions $u_h$, $p_h$, 
    and $z_h$ for the 
    energy regularization approach with a discontinuous target, at $t=0.3$,
    $0.5$, and $0.7$ (from top to bottom).}
  \label{fig:ex4sol}
\end{figure} 

\begin{figure}
  \centering
  \includegraphics[width=0.24\textwidth]{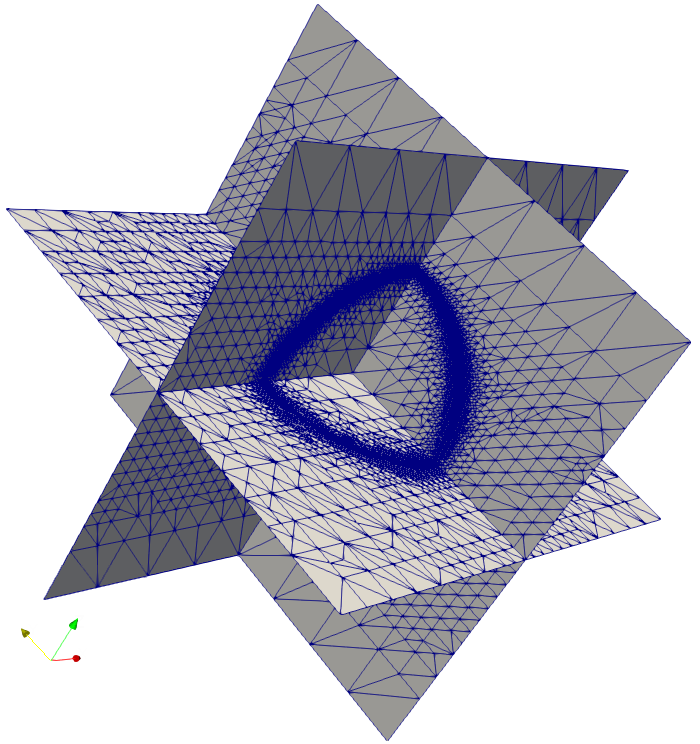}
  \includegraphics[width=0.24\textwidth]{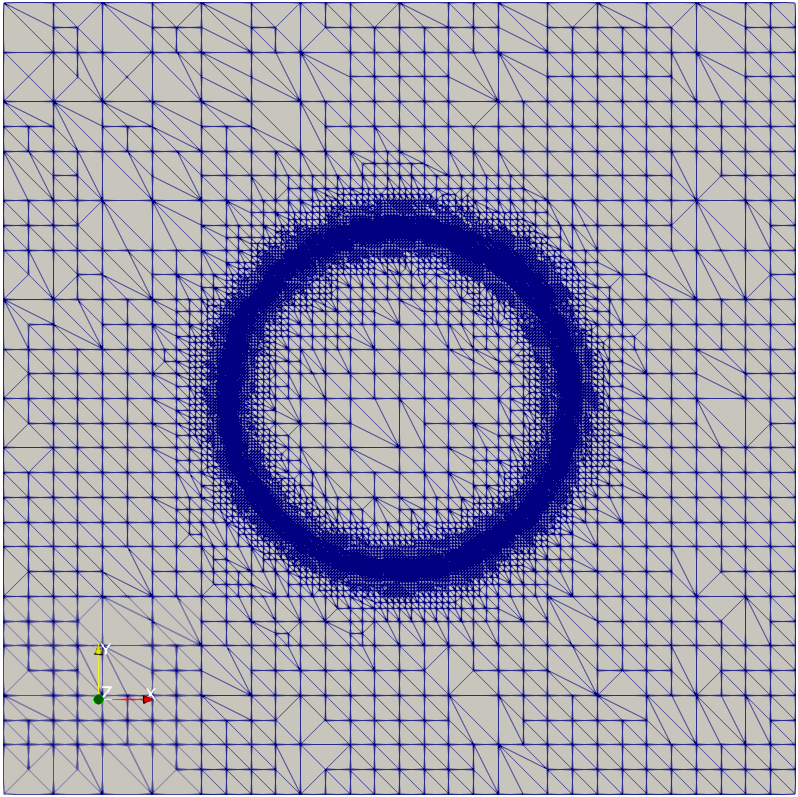}
  \includegraphics[width=0.24\textwidth]{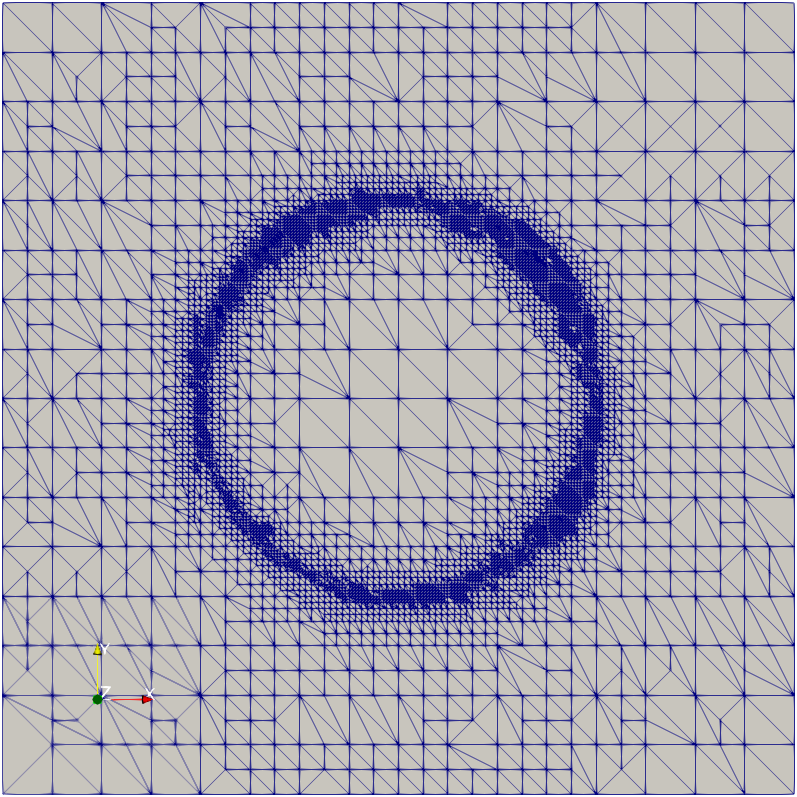}
  \includegraphics[width=0.24\textwidth]{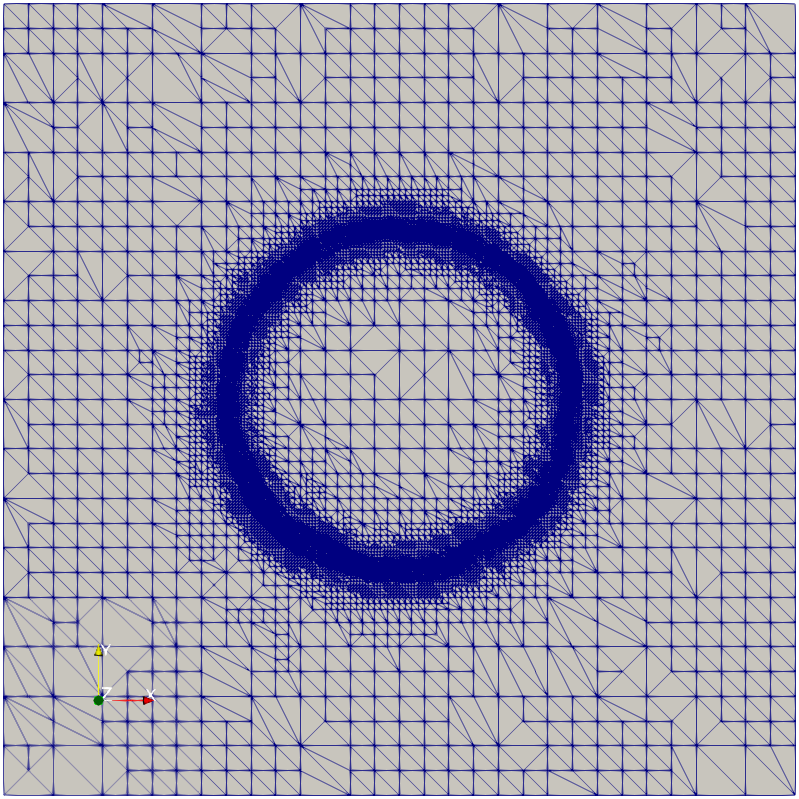}
  \caption{Example 2, Adaptive mesh refinement at the $78$th step in 
           space-time at $t=0.375$, $0.5$, and $0.625$ (from left to 
           right), using energy regularization with a discontinuous target.} 
  \label{fig:ex4mesh}
\end{figure} 

Moreover, we also compare the numerical solutions of the energy 
regularization approach (\ref{eqn:energy regularization}) with the 
solution using $L^2$-regularization (\ref{eqn:L2 regularization}), and 
with the solution using an $L^2+L^1$ regularization that promotes 
spatio-temporal sparsity, i.e., minimize
\begin{equation}\label{eq:L2L1 regularization}
{\mathcal J} (u, z) := 
\frac{1}{2} \, \int_Q \left| u  - u_d \right|^2\,dx\,dt +
\frac{1}{2}\varrho \, \|z\|_{L^2(Q)}^2 + \mu \, \|z\|_{L^1(Q)},
\end{equation}
subject to \eqref{eqn:linearstateequation}, and with $\mu > 0$. A similar 
parabolic optimal sparse control model problem has been
considered, e.g, in our recent work \cite{LSTY_RADON}, see also the
references given therein.
 
For all cases, we plot the state $u$ and the control $z$ at
$t=0.65$ having a closer look near the interface as depicted in
Fig. \ref{fig:compl2l2l1h1}. As predicted, the interface is resolved 
much sharper, and much less oscillations show up, for the state using the
energy regularization than using the $L^2$-regularization. With the
additional $L^1$ term, the state shows less oscillation than pure 
$L^2$-regularization, but a less shaper interface captured than the 
energy regularization. We further obtain sparser controls by the energy 
regularization, i.e., the control is non-zero only in a narrow region along 
the interface, while the control acts in a much larger region near the
interface and almost extends to the whole space-time domain by the 
$L^2$-regularization. The $L^2+L^1$ approach produces a bit spatially sparser 
solution than the $L^2$-regularization, see Fig. \ref{fig:compl2l2l1h1line} 
for a closer comparison of the control along the
line $[0,0.55,0.65]-[1,0.55,0.65]$. All these results are obtained on
adaptive meshes that are driven by residual-type error indicators for the
coupled optimality system. A comparison of adaptive meshes on the cutting plane
$t=0.625$ is illustrated in Fig. \ref{fig:compadaptmesh}.  

\begin{figure}
  \centering
  \includegraphics[width=0.32\textwidth]{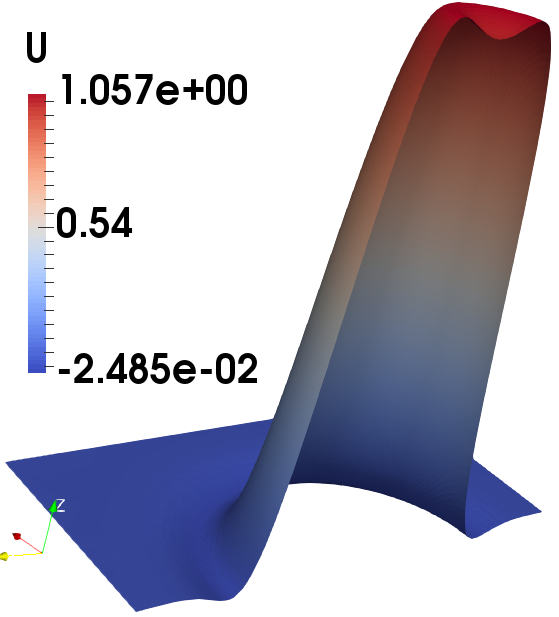}
  \includegraphics[width=0.32\textwidth]{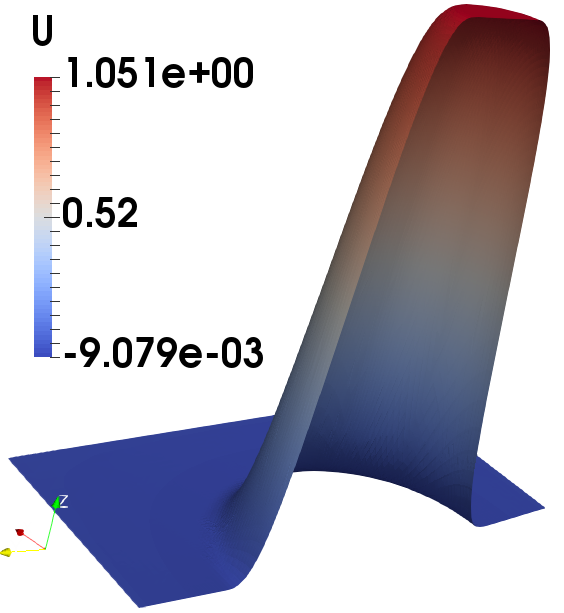}
  \includegraphics[width=0.32\textwidth]{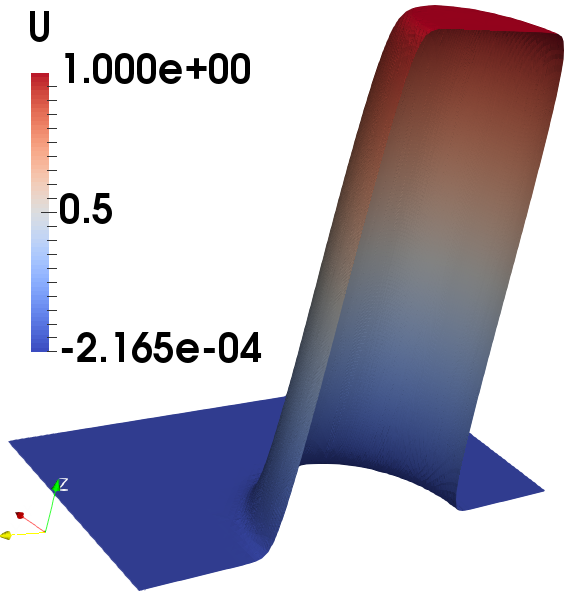}
  \includegraphics[width=0.32\textwidth]{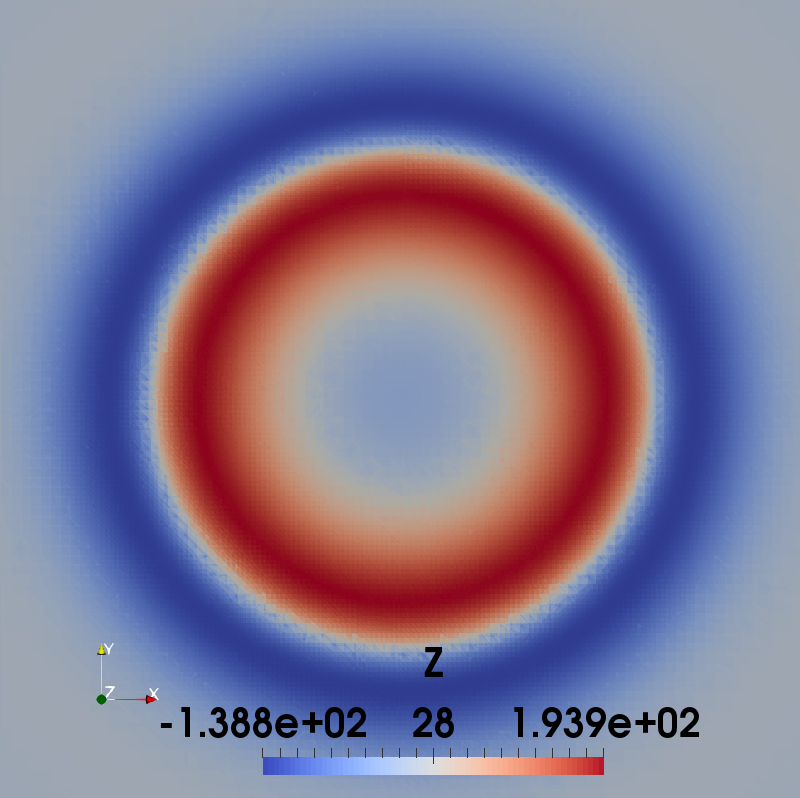}
  \includegraphics[width=0.32\textwidth]{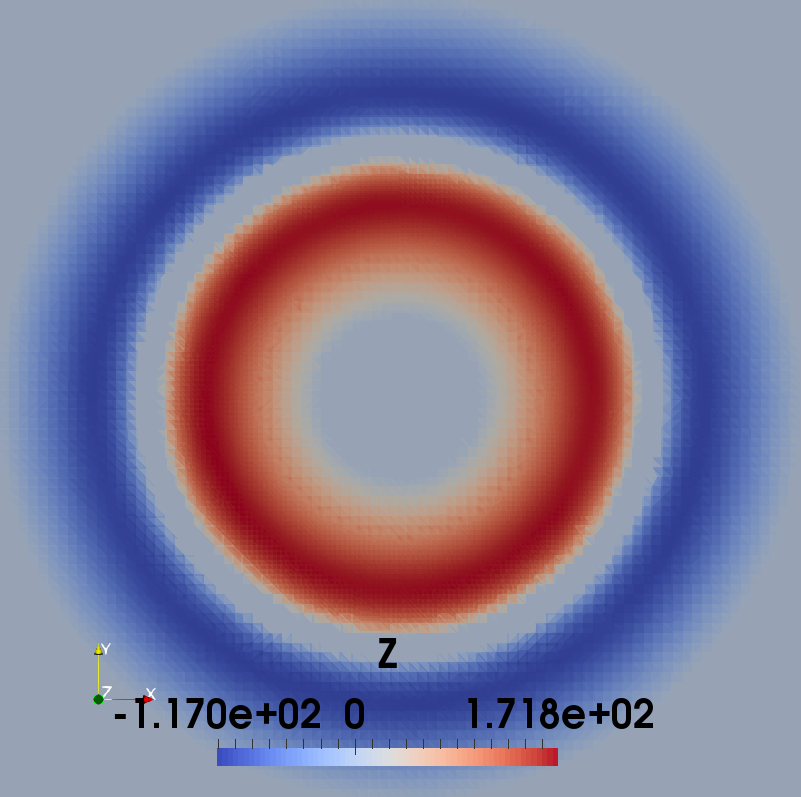}
  \includegraphics[width=0.32\textwidth]{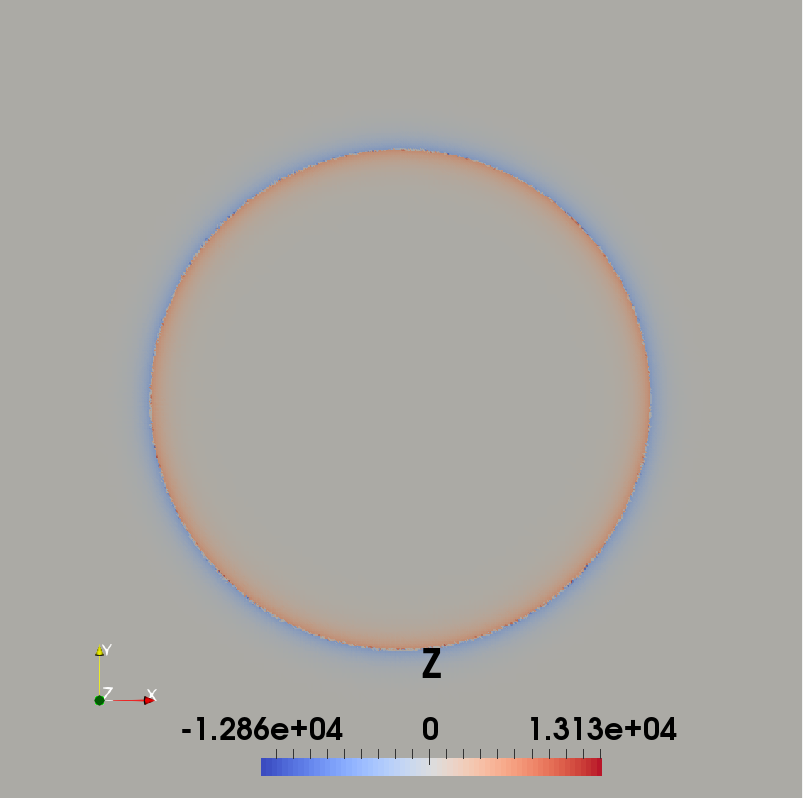}
  \caption{Example 2, Comparison of the numerical solutions of the state $u$
    and control $z$ (from top to bottom) at time $t=0.65$, using
    $L^2$-regularization ($\rho=10^{-6}$),
    $L^2+L^1$ ($\rho=10^{-6}$, $\mu=10^{-4}$), and energy 
    regularization (from left to right).} 
  \label{fig:compl2l2l1h1}
\end{figure}  

\begin{figure}
  \centering
  \includegraphics[width=0.32\textwidth]{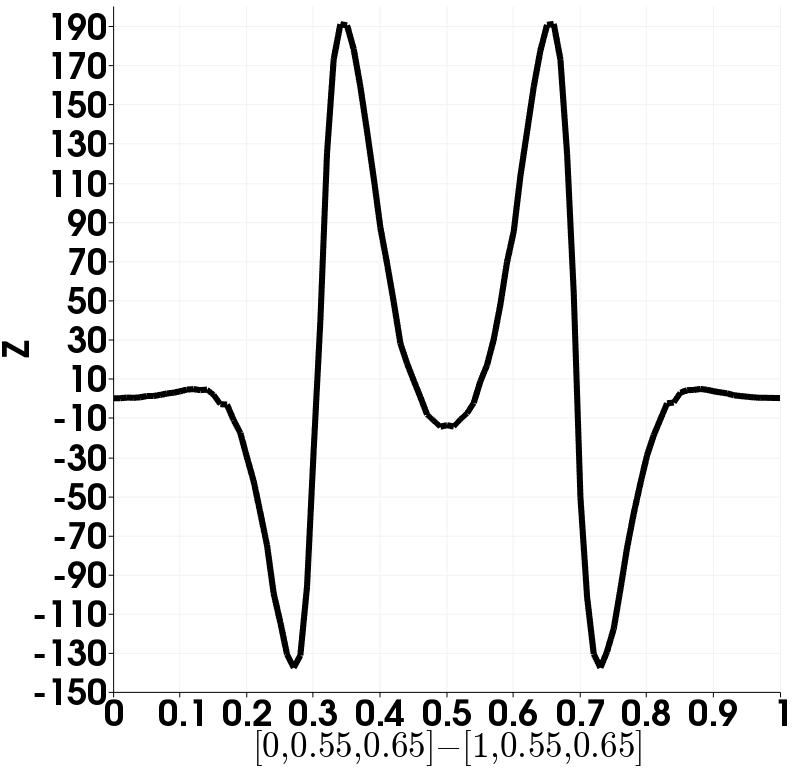}
  \includegraphics[width=0.32\textwidth]{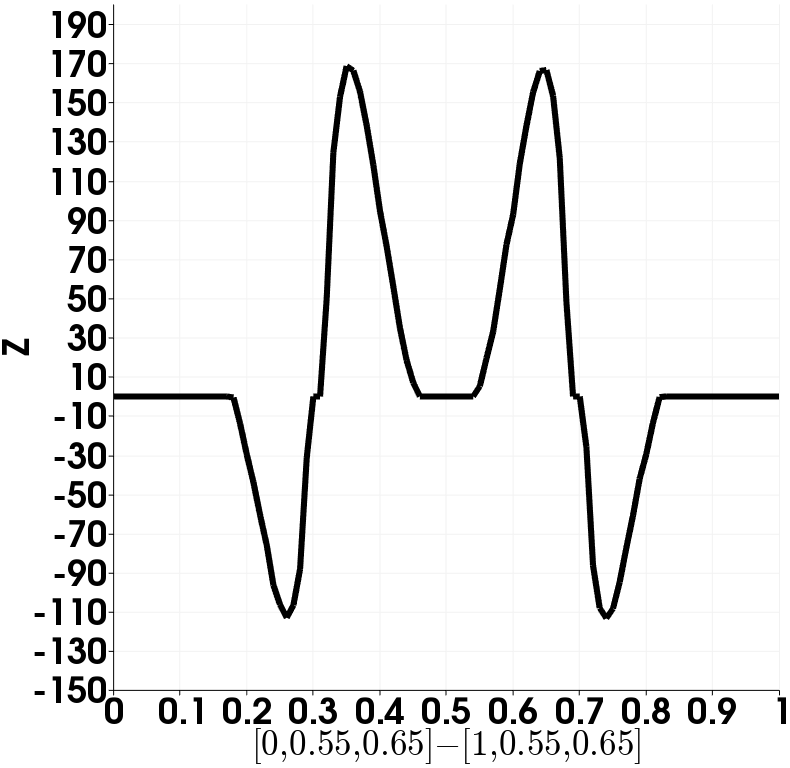}
  \includegraphics[width=0.32\textwidth]{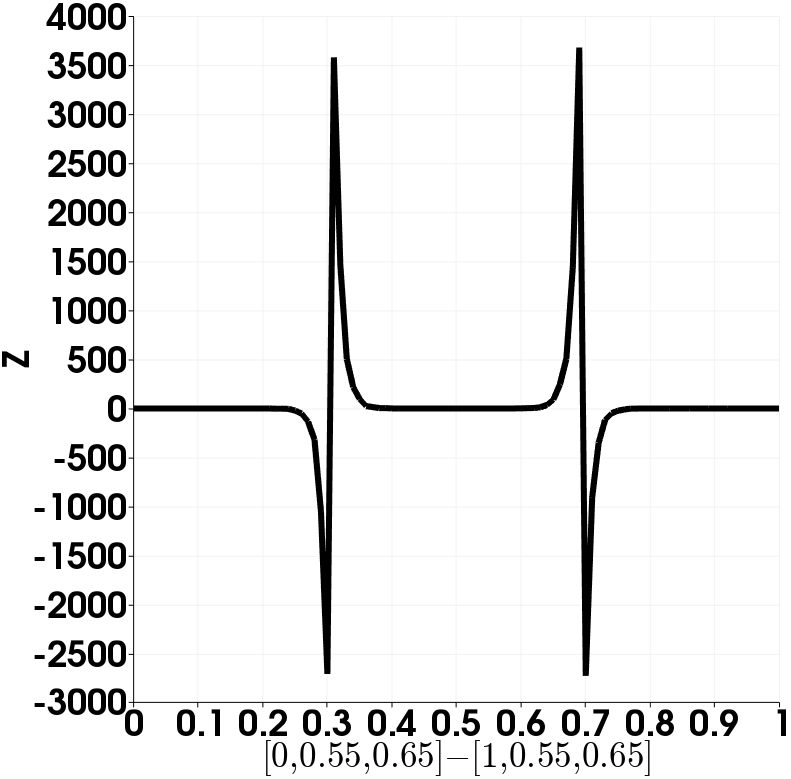}
  \caption{Example 2, Comparison of the numerical solutions of the control $z$ 
    along the line $[0,0.55,0.65]-[1,0.55,0.65]$, using $L^2$-regularization 
    ($\rho=1e-6$),
    $L^2+L^1$ ($\rho=1e-6$, $\mu=1e-4$), and energy 
    regularization (from left to right).} 
  \label{fig:compl2l2l1h1line}
\end{figure}  

\begin{figure}
  \centering
  \includegraphics[width=0.32\textwidth]{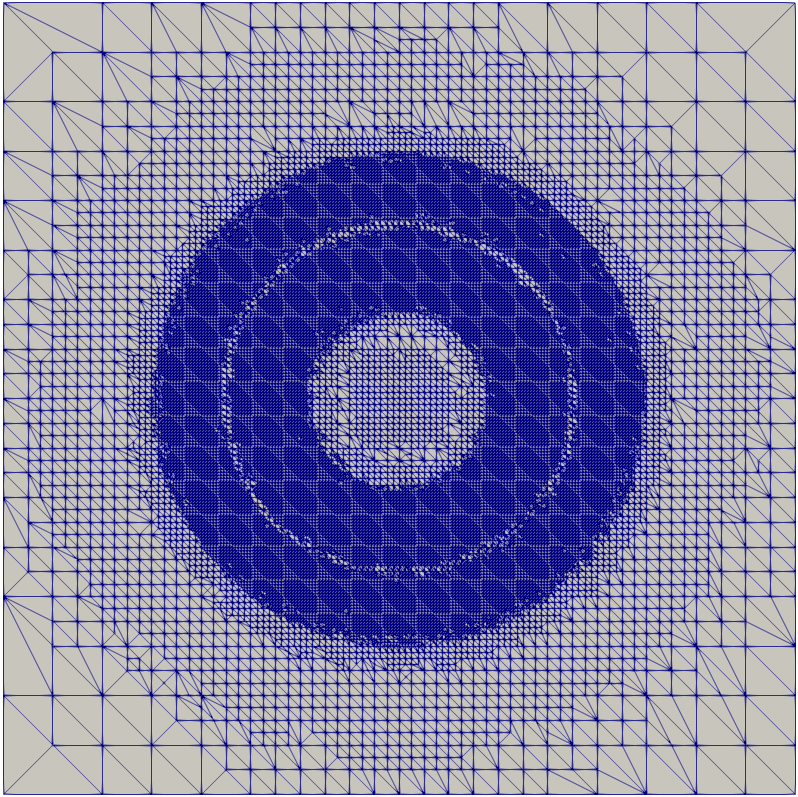}
  \includegraphics[width=0.32\textwidth]{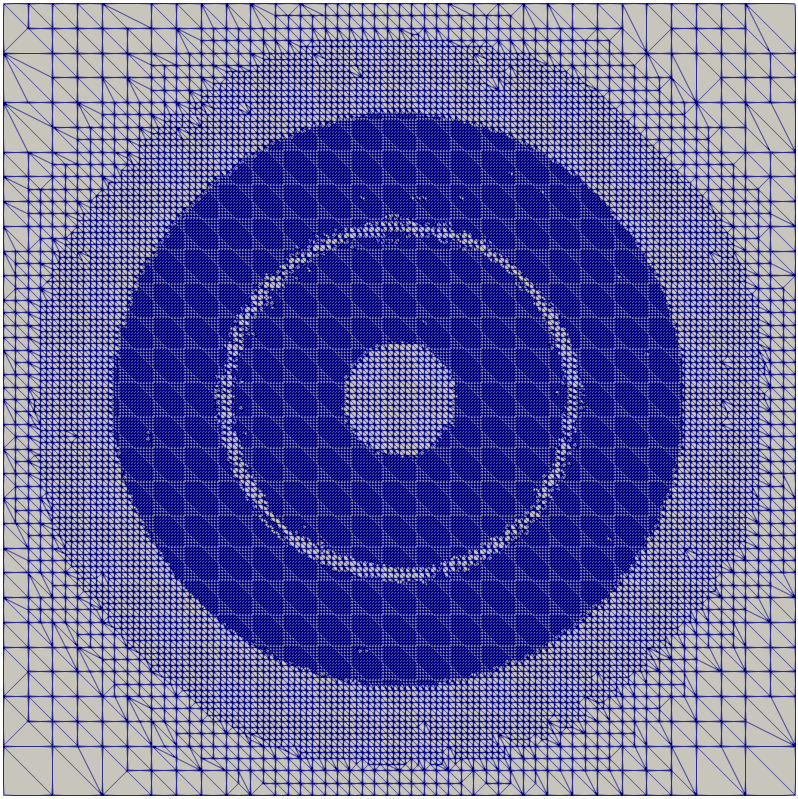}
  \includegraphics[width=0.32\textwidth]{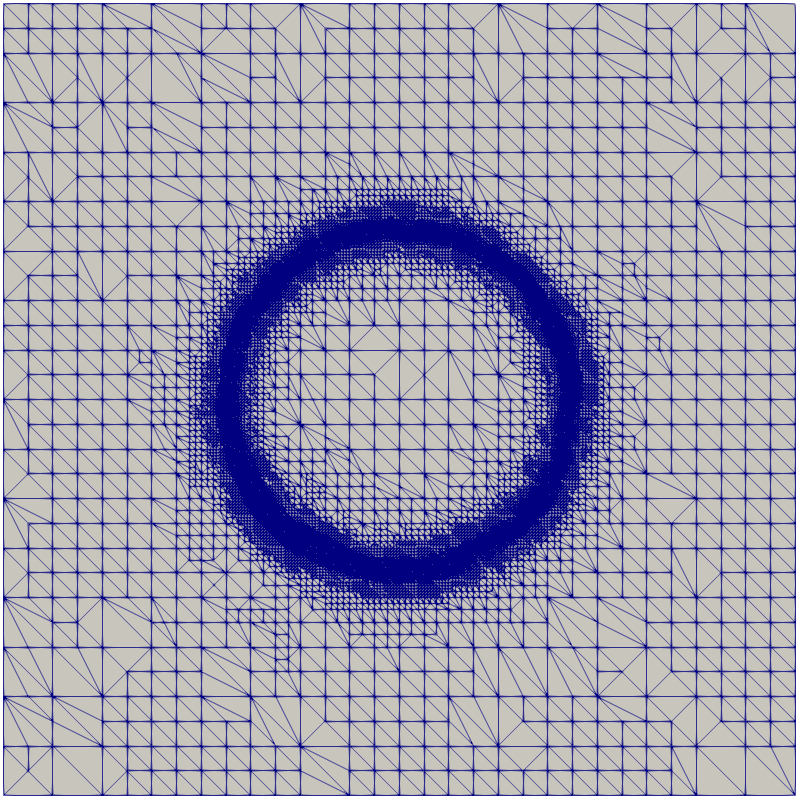}
  \caption{Example 2, Comparison of the adaptive meshes on the cutting plane at time
    $t=0.625$: 
    $L^2$-regularization, $20$th step, $2,080,493$ grid points in space-time (left);
    $L^2+L^1$, $23$th step, $3,320,340$ grid points in space-time (middle);
    energy regularization, $78$th step, $3,398,213$ grid points in space-time (right).} 
  \label{fig:compadaptmesh}
\end{figure}

\subsection{An example in three space dimensions}
Analogous to the example in two space dimensions as considered in 
Section \ref{sec:2dexm}, we now construct an example with an explicitly known solution of the
first-order optimality system in three space dimensions as follows:   
\begin{equation*}
  \begin{aligned}
    u(x,t)&=3\pi^2\sin(\pi x_1)\sin(\pi x_2)\sin(\pi x_3)\left(et^2+t\right),\\
    p(x,t)&=-\varrho\sin(\pi x_1)\sin(\pi x_2)\sin(\pi x_3)\left( at^2+bt+1\right),\\
    z(x,t)&=3\pi^2\sin(\pi x_1)\sin(\pi x_2)\sin(\pi x_3)\left( at^2+bt+1\right),
  \end{aligned}
\end{equation*}
where 
\[
a=-\frac{9\pi^4+3\pi^2}{3\pi^2+2}, \quad
b=\frac{9\pi^4-2}{3\pi^2+2}, \quad
e=-\frac{3\pi^2+1}{3\pi^2+2}. 
\]
When using the adjoint equation, the target is given as 
$u_d=u+\partial_tp+\Delta_xp$, and we set the
regularization parameter $\varrho=0.01$. The numerical solutions 
$u_h$ at $t=1$, $p_h$ at $t=0$ and $z_h$ at $t=0$ are
displayed in Fig. \ref{fig:4dsol}. 

\begin{figure}
  \centering
  \includegraphics[width=0.325\textwidth]{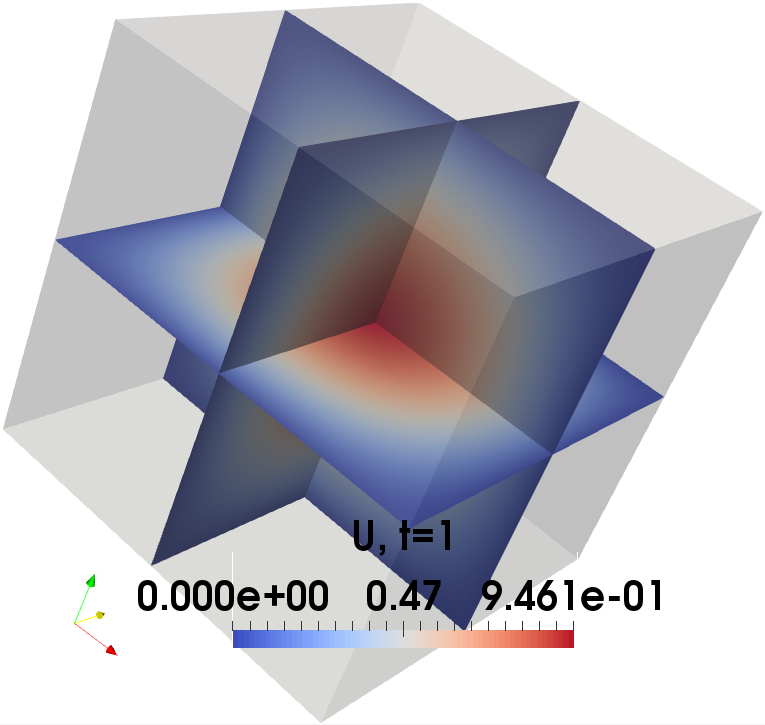}
  \includegraphics[width=0.325\textwidth]{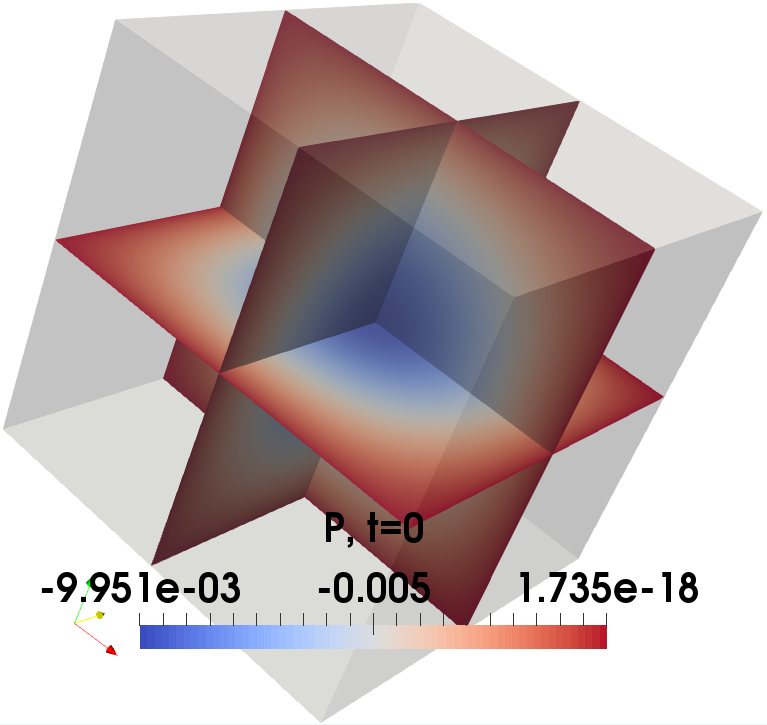}
  \includegraphics[width=0.325\textwidth]{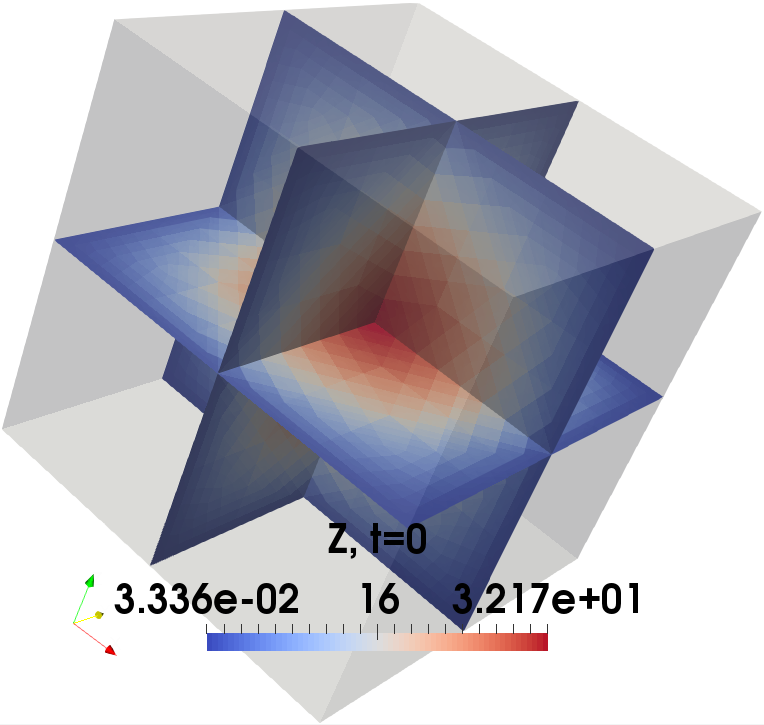}
  \caption{Example 3, numerical solutions $u_h$ at $t=1$, $p_h$ at $t=0$, and $z_h$ at
    $t=0$ (from left to right).} 
  \label{fig:4dsol}
\end{figure}  

The error of the space-time finite element approximations $u_h$ and $p_h$ in
the corresponding norms $\|\cdot\|_Y$ and $\|\cdot\|_{L^2(Q)}$ are given in 
Table \ref{tab:eoc4d} at every second refinement step, see also the error of
the objective functional $|J(u, z) - J(u_h, z_h)|$ in the last column. In addition,
we illustrate the convergence rates in Fig.~\ref{fig:4dconv}, where we observe optimal
rates for the state $u$ and the adjoint state $p$ as already experienced in two space
dimensions. Note that, at the finest refinement level, we have
$11,171,330$ degrees of freedom in total for the coupled system. The mesh size
is approximately $h=0.03125$.  

\begin{table}
\caption{Example 3, the error for numerical approximations $u_h$, $p_h$, and
  $J_h(u_h,z_h)$, with $J(u, z)=7.818e-1$.} \label{tab:eoc4d}
\centering
\begin{tabular}{rrrrrrr}
\toprule
Lev &\#Dofs &  $\|u-u_h\|_Y$ & $\|u-u_h\|_{L^2(Q)}$ & $\|p-p_h\|_Y$ & 
$\|p-p_h\|_{L^2(Q)}$ & $|J-J_h|$ \\
\midrule
$2$ &$470$ & $5.638e-0$ & $3.757e-1$ & $5.663e-2$& $3.911e-3$& $2.144e-1$    \\
$4$ &$1,430$ & $4.988e-0$ & $2.927e-1$ &$5.012e-2$& $3.096e-3$& $1.648e-1$\\
$6$ &$4,370$ &  $4.111e-0$ & $2.142e-1$ & $4.132e-2$& $2.251e-3$&$1.064e-1$ \\
$8$ &$18,450$ & $3.225e-0$ & $1.252e-1$ & $3.239e-2$& $1.321e-3$&$5.916e-2$  \\
$10$ &$53,186$ & $2.187e-0$ & $6.349e-2$ &  $2.195e-2$& $6.648e-4$&$2.559e-2$\\
$12$ &$268,226$ & $1.728e-0$ & $4.462e-2$ & $1.735e-2$& $4.617e-4$&$1.573e-2$  \\
$14$ &$744,962$ & $1.097e-0$ & $1.547e-2$ & $1.101e-2$& $1.633e-4$&$6.042e-3$  \\
$16$ &$4,103,682$  & $8.627e-1$ &$1.099e-2$ & $8.661e-3$& $1.141e-4$&$3.729e-3$ \\
$18$ &$11,171,330$  & $5.484e-1$ & $3.846e-3$ & $5.506e-3$&$4.074e-5$&$1.487e-3$ \\
\bottomrule
\end{tabular}
\end{table}

\begin{figure}
  \centering
  \includegraphics[width=0.326\textwidth]{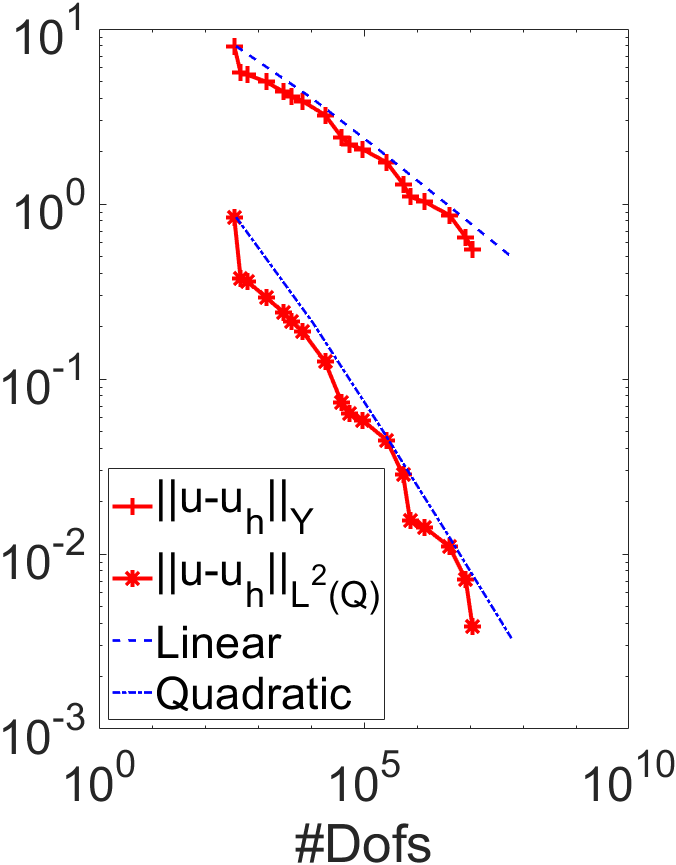}
  \includegraphics[width=0.326\textwidth]{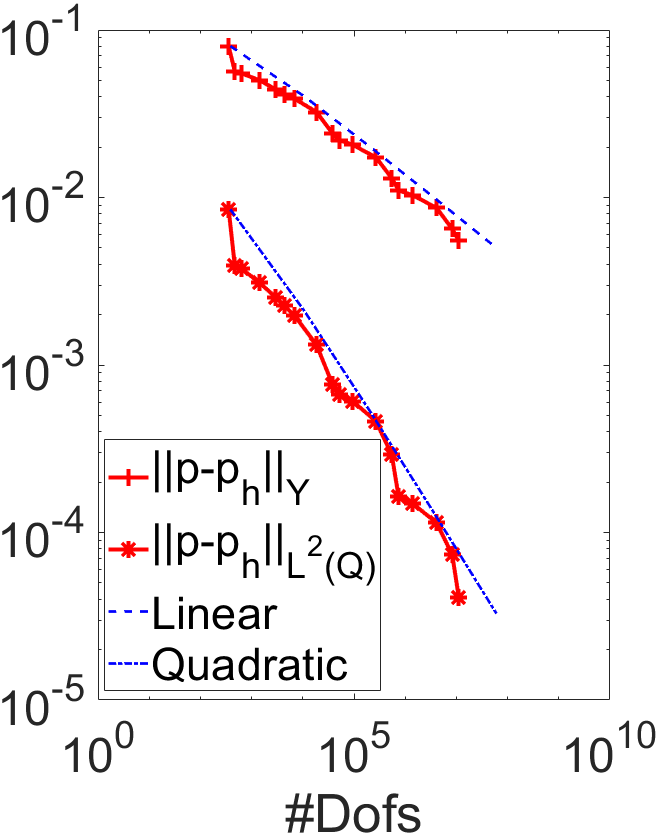}
  \includegraphics[width=0.326\textwidth]{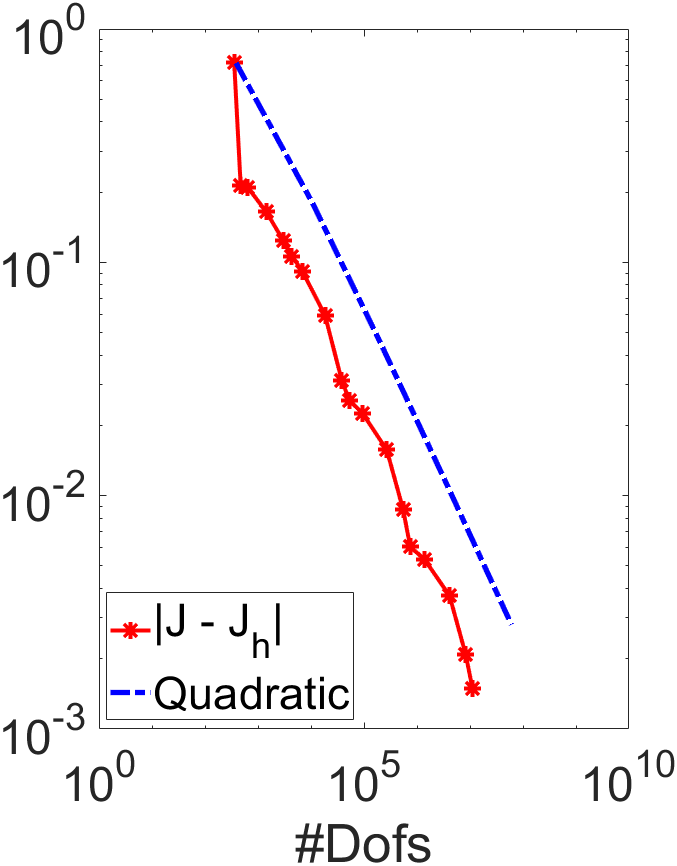}
  \caption{Example 3, convergence history for numerical approximations $u_h$, 
    $p_h$, and $J_h(u_h,z_h)$ (from left to right).} 
  \label{fig:4dconv}
\end{figure} 

\section{Conclusions}\label{sec:con}
In this work, we have analyzed space-time finite element methods for the
numerical solution of parabolic optimal control problems using energy
regulaization in the tracking type objective functional. In contrast to the
$L^2$ regularization approach as well as the combined $L^2+L^1$ approach, 
we observe a more localized control and sharper contours for discontinuous 
target functions when using energy regularization. In the latter,
the discrete optimality system is block skew-symmetric but positive definite,
which will allow us to construct optimal preconditioned iterative solution
strategies,see, e.g., 
\cite{LSTY:SchielaUlbrich:2014a,LSTY:SchulzWittum:2008a,LSTY:Zulehner:2011a}.
Although we only consider the case of unconstrained optimal control problems, 
this approach can be extended to problems with control constraints and will 
be considered elsewhere.

\bibliography{LSTY}
\bibliographystyle{abbrv}

\end{document}